\documentclass[reqno,a4paper,11pt]{amsart}
\usepackage{amsmath, amssymb, xcolor}
\usepackage{bm}
\IfFileExists{dsfont.sty}{\usepackage{dsfont}}{}
\providecommand{\mathds}{\mathbb}
\usepackage[mathscr]{eucal}
 \usepackage[top=2.5cm, bottom=2.5cm, left=2.5cm, right=2.5cm]{geometry}

\usepackage[foot]{amsaddr} 
\allowdisplaybreaks
\usepackage{mathtools}

\usepackage[utf8]{inputenc}
\usepackage[T5]{fontenc}

\usepackage[shortlabels]{enumitem}
\setlist{topsep=2pt, itemsep=2pt}

\usepackage{thmtools}

\usepackage[colorlinks=true,linkcolor=teal, citecolor=teal]{hyperref}

\usepackage[nameinlink,capitalize]{cleveref}
\crefname{equation}{}{}

\crefname{theo}{Theorem}{Theorems}
\crefname{coro}{Corollary}{Corollaries}
\crefname{prop}{Proposition}{Propositions}
\crefname{defi}{Definition}{Definitions}
\crefname{lemm}{Lemma}{Lemmas}
\crefname{exam}{Example}{Examples}
\crefname{assum}{Assumption}{Assumptions}

\newtheorem{theo}{Theorem}[section]
\newtheorem{coro}[theo]{Corollary}
\newtheorem{lemm}[theo]{Lemma}
\newtheorem{prop}[theo]{Proposition}

\theoremstyle{definition}
\newtheorem{defi}[theo]{Definition}

\newtheorem{exam}[theo]{Example}
\newtheorem{rema}[theo]{Remark}

\numberwithin{equation}{section}

\newcommand{\bR}{\mathbb R}
\newcommand{\bN}{\mathbb N}

\newcommand{\bZ}{\mathbb Z}

\newcommand{\bE}{\mathbb E}

\newcommand{\bP}{\mathbb P}

\newcommand{\1}{\mathds{1}}

\newcommand{\CUI}{\mathrm{CUI}}
\newcommand{\UI}{\mathrm{UI}}

\newcommand{\cB}{\mathcal B}
\newcommand{\cC}{\mathcal C}

\newcommand{\cF}{\mathcal F}

\newcommand{\cK}{\mathcal K}

\newcommand{\cS}{\mathcal S}
\newcommand{\cP}{\mathcal P}

\newcommand{\cX}{\mathcal X}
\newcommand{\cY}{\mathcal Y}

\newcommand{\Leb}{\mathbf{\lambda}}

\newcommand{\lt}{\left}
\newcommand{\rt}{\right}

\newcommand{\ep}{\varepsilon}
\newcommand{\e}{\mathrm{e}}

\newcommand{\od}{\mathrm{d}}

\newcommand{\Cov}{\mathbb{C}\mathrm{ov}}

\newcommand{\trm}[1]{\textrm{#1}}

\def\b{\big}
\def\B{\Big}
\def\bb{\bigg}
\def\BB{\Bigg}

\def\<{\left<}
\def\>{\right>}

\begin{document}

\title[Mean convergence for random elements indexed in measure spaces]{Mean convergence for Banach space-valued random elements indexed in measure spaces}

\author[Nguyen Thi Kim Sang]{Nguyen Thi Kim Sang$^{1}$}
\address{$^1$Department of Mathematics, Vinh University, Nghe An, Vietnam}
\email{n.kimsangvt@gmail.com}

\author[Nguyen Tran Thuan]{Nguyen Tran Thuan$^{2}$}
\address{$^2$School of Mathematics and Statistics, University of Economics Ho Chi Minh City, Ho Chi Minh City, Vietnam}
\email{thuannt@ueh.edu.vn}



\maketitle

{
\begin{center}
\small \textit{This article is dedicated to Professor {\fontencoding{T5}\selectfont Nguyễn Văn Quảng} on the occasion of his 70th birthday}
\end{center}
}

\begin{abstract}
This article studies mean convergence of Banach space-valued
random elements indexed in a family of finite measure spaces. We derive $L^p$-convergence theorems under (compact) uniform integrability in two regimes: a decaying-index-mass regime and a bounded-index-mass regime, the latter requiring a  new dependence structure which is called diagonal negative dependence for the random elements and expressed via the self-product of the index measure.  We provide examples showing that the conditions to obtain the results are sharp and strictly weaker than related conditions in the literature. As a further illustration for the index measure space framework, a functional law of large numbers in $L^p$ on the space of continuous functions is derived, where the random elements are solutions of stochastic differential equations driven by Brownian motions extracted from a common Brownian sheet.

\bigskip

\noindent \textbf{Keywords.} Banach space, Brownian sheet, Compact uniform integrability, Diagonal negative dependence, $L^p$-convergence, Stochastic differential equation.

\smallskip

\noindent \textbf{2020 Mathematics Subject Classification.} 60B12, 60H10, 60E15, 28A25.
\end{abstract}


\section{Introduction}\label{sec:intro}

Laws of large numbers (LLNs) for Banach space-valued random elements are a classical topic in probability theory and have attracted sustained attention over the past decades. The establishment of LLNs is often closely related to the geometric structure of the underlying Banach space, see, for example, the pioneering work of Hoffmann-J\o{}rgensen and Pisier \cite{HG76}, the subsequent developments by de Acosta \cite{Ac81}, and numerous follow-up works. When no geometry is imposed on the Banach space, additional structure conditions on the random elements are usually required. The aim of this article is to study LLNs in terms of $L^p$-convergence under suitable uniform integrability assumptions without any geometry of the underlying Banach space.

Recall that a family $\{X_i:i\in I\}$ of random elements taking values in a separable Banach space $(\cX, \|\cdot\|)$ is \textit{uniformly integrable} (UI) if, for every $\ep>0$, there is a closed ball $\bar B_\ep$ centered at the origin such that $\sup_{i\in I}\bE [\|X_i \| \1_{\{X_i\notin \bar B_\ep\}}]\le \ep$. Replacing the closed ball $\bar B_\ep$ by a compact subset $K_\ep\subset\cX$ leads to the stronger notion called \textit{compact uniform integrability} (CUI), see \cite{HG76}. It is obvious that the notions of UI and CUI are equivalent in finite-dimensional spaces.  Over the years, the UI/CUI condition has become a standard assumption to achieve LLNs for Banach space-valued random elements. The existing literature, however, essentially considers the discrete-index setting in which the random elements are indexed in a discrete set and the weighting is encoded by a scalar array $\{a_{n,k}\}$, see, e.g., \cite{BCS14, Ca94, Ca97, CV05, CRUV20, CCRV21, CW12, RT25, TW79, TQ20, TQN14, WR84, WR87} and the references therein.

Recently, the authors in \cite{GGT25} proposed a notion of CUI, which is called $(\nu_\theta)$-$\CUI$, in  a general framework where the random elements are indexed in an arbitrary family of finite measure spaces $\{(S_\theta, \cS_\theta, \nu_\theta) : \theta \in \Theta\}$, which we call in this article the index measure space framework. The role of the measures $\nu_\theta$ is to encode the weighting scheme, and with particular choices of $\nu_\theta$ as discrete measures one can recover the classical, Ces\`aro, and weighted UI/CUI as special cases, see the discussion and related  examples in \cite{GGT25}. Although \cite{GGT25} provided various characterizations of $(\nu_\theta)$-$\CUI$, no limit theorem was derived in this setting.

In this article, we specialize the set $\Theta = \bN$ and consider the family of index measure  spaces $\{(S_n, \cS_n, \nu_n) : n \ge 1\}$. Then, we derive $L^p$-convergences for a family of random elements which satisfies some variants of $(\nu_n)$-CUI/UI conditions. Specifically, our contributions are as follows:
\begin{enumerate}[(a)]
\item \textit{New notions of $(\nu_n)$-$\UI$ and $(\nu_n)$-$\CUI$.} Inspired by the approaches
in \cite{CV05, CRUV20}, we introduce in \cref{def:UI,def:CUI} the notions of $(\nu_n)$-$\UI(p)$ with respect to (w.r.t.) a threshold sequence $\{h(n)\}$ and of $(\nu_n)$-$\CUI(p)$ w.r.t. a sequence $\{K_n\}$ of compact sets. These reduce to the existing UI/CUI conditions by specializing $\nu_n$, $h(n)$, or $K_n$.

\item \textit{Mean convergence under decaying index masses.}
When the total mass $\nu_n(S_n)$ of the index measure decays to zero at a faster rate than the threshold $h(n)$ grows, we prove in \cref{thm:decay-mass} an $L^p$-convergence under $(\nu_n)$-$\UI(p)$ w.r.t. $\{h(n)\}$ without requiring any additional structure of the random elements. As a by-product, this result gives directly the $L^p$-convergence under $(\nu_n)$-$\CUI$ w.r.t. $\{K_n\}$, see \cref{cor:decay-mass-CUI}. These results extend and refine \cite[Theorem 2]{CRUV20} in several aspects, see \cref{rema:simplify-CRUV20}.

\item \textit{Mean convergence under bounded index masses.}
When $\{\nu_n(S_n)\}$ is uniformly bounded, we prove in \cref{thm:bounded-masses} an $L^p$-convergence under $(\nu_n)$-$\CUI(p)$ w.r.t. $\{K_n\}$. The additional structure condition of the random elements is described via the product measure $\nu_n \otimes \nu_n$ in which the  product space $S_n \times S_n$ of index pairs is split into two disjoint regions. The first one is a \textit{near-pair} set, on which any dependence of random elements is allowed. To describe the dependence on the second region, called the \textit{far-pair} set, we introduce the notion of \textit{diagonal negative dependence}. To the best of our knowledge, this dependence structure appears to be new in a general Banach space and it is strictly weaker than (pairwise) independence, see \cref{rema:PDND}. In particular, \cref{thm:bounded-masses} extends \cite[Theorem~1]{CRUV20} to a general index
space, reduces the exponent of the covering factor, and requires no
pairwise independence of the random elements. We also provide in \cref{sec:examples} three examples to illustrate the sharpness of the conditions of \cref{thm:bounded-masses}.

\item \textit{Beyond the discrete-index setting.}
A feature of the index measure space framework is that the index measures $\nu_n$ are not necessarily discrete. As an illustration, we establish in \cref{sec:functional-limit-thm} a functional LLN in $L^p$ where the underlying Banach space is $C([0,1])$, the space of continuous functions on $[0,1]$. The random elements are diffusion processes solving a family of stochastic differential equations driven by Brownian motions induced from a common Brownian sheet. \cref{exam:continuous-nu} provides a situation for such a functional LLN where the index measure $\nu_n$ is absolutely continuous w.r.t. the Lebesgue measure.
\end{enumerate}

The article is organized as follows. In \cref{sec:preliminaries}, we fix the notation, recall the notion of Bochner integral and discuss the notions of (compact) uniform integrability. Main convergence results are contained in \cref{sec:main-results}. \cref{sec:discuss-exam} provides illustrative examples for the main results.

\section{Preliminaries}\label{sec:preliminaries}

\subsection{Notation}

Let $\bN := \{1, 2, \ldots\}$. For a set $S$,
denote by $\cP(S)$ the collection of all subsets of $S$. The complement of a
set $A$ is denoted by $A^c$. For $a, b \in \bR$, we write
$a \vee b := \max\{a, b\}$ and $a \wedge b := \min\{a, b\}$. Let
$\lceil \cdot \rceil$ denote the ceiling function, and $\log$ stands for the
natural logarithm.

Let $(M, \mathcal{M}, \mu)$ be a measure space. For $A \in \mathcal{M}$, we
denote by $\1_A$ the indicator function of $A$. For $p \in (0, \infty)$,
define $L^p(\mu) := L^p(M, \mathcal{M}, \mu)$ as the space of
$\mathcal{M}/\mathcal{B}(\bR)$-measurable functions $f \colon M \to \bR$
satisfying $\int_M |f(x)|^p \, \mu(\od x) < \infty$, and let $L^\infty(\mu)$
be the space of essentially bounded measurable functions. For
$x \in M$, the Dirac measure $\delta_x$ is defined by $\delta_x(A) = 1$ if
$x \in A$ and $\delta_x(A) = 0$ otherwise, for $A \in \mathcal{M}$. Obviously, $\delta_x$ is a probability measure on $\mathcal{M}$. Given measure
spaces $(M_1, \mathcal{M}_1, \mu_1)$ and $(M_2, \mathcal{M}_2, \mu_2)$, their
product measure space is denoted by
$(M_1 \times M_2, \mathcal{M}_1 \otimes \mathcal{M}_2, \mu_1 \otimes \mu_2)$. Let $\Leb$ be the one-dimensional Lebesgue measure.

\subsection{Bochner integral}

Throughout this article, we let $(\mathcal{X}, \|\cdot\|)$ be a separable Banach
space endowed with the Borel $\sigma$-algebra $\mathcal{B}(\mathcal{X})$
induced by the norm.

Assume $(M, \mathcal{M}, \mu)$ is a finite measure space. An
$\mathcal{M}/\mathcal{B}(\mathcal{X})$-measurable mapping 
$f \colon M \to \mathcal{X}$ is \emph{Bochner integrable} w.r.t.
$\mu$ if $\int_M \|f(x)\| \, \mu(\od x) < \infty$, in which case the \textit{Bochner
integral} $\int_M f(x) \, \mu(\od x) \in \mathcal{X}$ is well-defined and
satisfies
$\| \int_M f(x) \, \mu(\od x) \| \le \int_M \|f(x)\| \, \mu(\od x).
$
For further details, we refer the reader to \cite{HNVW16}.

Let $(\Omega, \mathcal{F}, \bP)$ be a  probability space which is assumed to be complete throughout this article. An $\mathcal{F}/\mathcal{B}(\mathcal{X})$-measurable $X \colon \Omega \to \mathcal{X}$ is called an ($\mathcal{X}$-valued)
\emph{random element}. If $\bE\|X\| < \infty$, then the Bochner integral of
$X$ w.r.t. $\bP$ is called the \emph{Bochner expectation} of $X$,
denoted by $\bE X \in \mathcal{X}$. In the case $\mathcal{X} = \bR$, a random
element $X$ is called a random variable. The covariance of two square integrable random variables $X, Y$ is denoted by $\Cov(X, Y) :  =\bE XY - \bE X\, \bE Y$.

Let $\mathcal{K}(\mathcal{X})$ denote the collection
of all nonempty compact subsets of $\mathcal{X}$. For $K \in \mathcal{K}(\mathcal{X})$
and $\varepsilon > 0$, define the metric size $\|K\|$ and the covering number $N(K, \ep)$ of $K$ by setting
\begin{align*}
    \|K\| &:= \max\{\|x\| : x \in K\}, \\
    N(K, \varepsilon) &:= \min\Bigl\{ n \ge 1 : K \text{ is covered by } n
    \text{ open balls of radius } \varepsilon
    \text{ with centers in } K \Bigr\}.
\end{align*}

\subsection{Notions of (compact) uniform integrability} 
Assume from now until the end of this article that  $\{(S_n, \cS_n, \nu_n) : n \ge 1\}$ is a family of finite measure spaces satisfying
\begin{align}\label{eq:finite-spaces}
	\sup_{n \ge 1} \nu_n(S_n) =: C_\nu \in (0, \infty).
\end{align}

Motivated by the approaches in \cite{CV05, CRUV20} and \cite{GGT25}, we propose the following notions.

\begin{defi}\label{def:UI} Let $p \in (0, \infty)$ and assume $\{h(n) : n \ge 1\}$ is a sequence of non-negative real numbers. A collection $\{X^n_s : s\in S_n, n \ge 1\}$ of $\cX$-valued random elements is  \textit{$(\nu_n)$-uniformly $p$-th order integrable w.r.t. $\{h(n)\}$} (or $(\nu_n)$-$\UI(p)$ w.r.t. $\{h(n)\}$, for short) if:
	\begin{enumerate}[\rm (a)]
		\item\label{item:def:UI:measurability} the map $(\omega, s) \mapsto X^n_s(\omega)$ is $\cF \otimes \cS_n/\cB(\cX)$-measurable for every $n \ge 1$,
		
		\item \label{item:def:UI:limit} the following convergence holds:
		\begin{align}\label{eq:UI-condition}
			\lim_{n \to \infty} \bE\int_{S_n} \lt\|X^n_s\rt\|^p \1_{\{\|X^n_s\| > h(n)\}} \nu_n(\od s)  =0.
		\end{align}
	\end{enumerate}
	
\end{defi}

\begin{defi}\label{def:CUI} Let $p \in (0, \infty)$ and assume
 $\{K_n : n \ge 1\}$ is a family of compact subsets of $\cX$. A collection $\{X^n_s : s\in S_n, n \ge 1\}$ of $\cX$-valued random elements is \textit{$(\nu_n)$-compactly uniformly $p$-th order integrable w.r.t. $\{K_n\}$} (or $(\nu_n)$-$\CUI(p)$ w.r.t. $\{K_n\}$, for short) if:
	\begin{enumerate}[\rm (a)]
		\item\label{item:def:CUI:measurability} the map $(\omega, s) \mapsto X^n_s(\omega)$ is $\cF \otimes \cS_n/\cB(\cX)$-measurable for every $n \ge 1$,
		
		\item \label{item:def:CUI:seq} the following convergence holds:
		\begin{align}\label{eq:CUI-condition}
			\lim_{n \to \infty} \bE\int_{S_n} \lt\|X^n_s\rt\|^p \1_{\{X^n_s \notin K_n\}} \nu_n(\od s)  =0.
		\end{align}
	\end{enumerate}	
\end{defi}

\begin{rema}\label{rema:CUI-Kn}
If $\bE \int_{S_n} \|X^n_s\|^p \,\nu_n(\od s) <\infty$ for all $n \ge 1$ then there is a sequence $\{K'_n\}$ of compact sets such that $\bE\int_{S_n} \|X^n_s\|^p \,\1_{\{X^n_s \notin K'_n\}} \nu_n(\od s) \le 1/n$, which implies $(\nu_n)$-$\CUI(p)$ w.r.t. $\{K'_n\}$. This assertion can be argued by noting that $A \mapsto \bE \int_{S_n} \|X^n_s\|^p\, \1_{\{X^n_s \in A\}} \nu_n(\od s)$ is a finite measure on $\cB(\cX)$ and then applying \cite[Theorem 1.3]{Bi99}. The point of \cref{def:CUI} is that the family $\{K_n\}$ is prescribed and should be suitably chosen so that its metric complexity (measured by a covering number and metric size) does not grow too fast in relation to the index measure $\nu_n$ to achieve $L^p$-convergence, see  \cref {cor:decay-mass-CUI} and \cref{thm:bounded-masses} below.
\end{rema}

In the definition of $(\nu_n)$-$\CUI(p)$ w.r.t. $\{K_n\}$, the set $K_n$ may grow as $n \to \infty$ to accommodate more values of $X^n_s$ so that \eqref{eq:CUI-condition} holds. However, if one can choose those compact sets in a uniform way then one arrives at the following version of $\CUI$.

\begin{defi}\label{def:CUI-uniform}
	Let $p \in (0, \infty)$. A collection $\{X^n_s : s\in S_n, n \ge 1\}$ of $\cX$-valued random elements is  \textit{$(\nu_n)$-compactly uniformly $p$-th order integrable} (or $(\nu_n)$-$\CUI(p)$, for short) if:
	\begin{enumerate}[\rm (a)]
		\item\label{item:def:CUI:uniform-measurability} the map $(\omega, s) \mapsto X^n_s(\omega)$ is $\cF \otimes \cS_n/\cB(\cX)$-measurable for every $n \ge 1$,
		
		\item \label{item:def:CUI:uniform}for any $\ep >0$, there exists $K_\ep \in \cK(\cX)$ such that
		\begin{align*}
			 \sup_{n \ge 1} \bE\int_{S_n} \lt\|X^n_s\rt\|^p \1_{\{X^n_s \notin K_\ep\}} \nu_n(\od s)  \le \ep.
		\end{align*}
	\end{enumerate}	
\end{defi}
If the $\nu_n$ are discrete, then the $(\nu_n)$-$\CUI(p)$ boils down to the $\{a_{n, k}\}$-$\CUI(p)$ in \cite{Ca97}. It is clear that if  $\{X^n_s : s\in S_n, n \ge 1\}$ is $(\nu_n)$-$\CUI(p)$ then compact sets $\{K_{n}\}$ exist such that it is $(\nu_n)$-$\CUI(p)$ w.r.t. $\{K_{n}\}$. However, the converse does not hold as shown in the following example.
\begin{exam}
Let $\cX = \ell^1$ be the Banach space of summable real sequences
with canonical basis $(e_k)_{k\ge1}$. For each $n\ge1$ let $S_n=\{1,\dots,n\}$ and $\nu_n(\{k\})=1/n$ the uniform distribution on $S_n$, and set
$X^n_k:=e_k$ for $k\in S_n$. Then,
\begin{align*}
	\bE\int_{S_n}\lt\|X^n_s\rt\|^p_{\ell^1} \nu_n(\od s)
	=\frac1n\sum_{k=1}^{n}\|e_k\|_{\ell^1}^{p}=1,\quad\forall n\ge1.
\end{align*}
Taking $K_n:=\{e_1,\dots,e_n\}$, which is a compact subset of $\ell^1$,
we have $X^n_k\in K_n$ for all $k\in S_n$, so the integrand in
\eqref{eq:CUI-condition} is zero, and hence, $\{X^n_k : k \in S_n, n \ge 1\}$ is
$(\nu_n)$-$\CUI(p)$ w.r.t. $\{K_n\}$.

Let $K$ be an arbitrary compact subset of $\ell^1$. Since $\|e_i-e_j\|_{\ell^1}=2$ for $i\ne j$ and $K$ is compact, it can only contain finitely many of the $e_i$. Hence, for sufficiently large $n$ one has
\begin{align*}
	\bE\int_{S_n}\lt\|X^n_s\rt\|^p_{\ell^1} \1_{\{X^n_s\notin K\}}\nu_n(\od s)
	=\frac{1}{n}\,\#\{i\le n : e_i\notin K\}
	= 1 - \frac{1}{n}\#\{i\le n : e_i\in K\} \to 1
\end{align*}
as $n \to \infty$. Therefore, $\{X^n_k : k \in S_n, n \ge 1\}$ is \textit{not}
$(\nu_n)$-$\CUI(p)$.
\end{exam}

We collect some basic properties for the notions above.
\begin{rema}\label{rema:property} We give some properties of $(\nu_n)$-$\UI(p)$ w.r.t. $\{h(n)\}$ and $(\nu_n)$-$\CUI(p)$ w.r.t. $\{K_n\}$.
	\begin{enumerate}
		\item \label{item:1} According to \cite[Remark 6(1)]{GGT25}, the joint measurability condition in \cref{def:UI,def:CUI,def:CUI-uniform} is satisfied if $S_n$ is countable and $\cS_n: = \cP(S_n)$ for $n \ge 1$.
		
		\item \label{item:2} If $\{X^n_s : s \in S_n, n \ge 1\}$ is $(\nu_n)$-$\CUI(p)$ w.r.t. $\{K_n\}$, then it is $(\nu_n)$-$\UI(p)$ w.r.t. $\{h(n)\}$ for $h(n) : = \|K_n\|$. This is immediate by noting that $\1_{\{\|X^n_s\| > h(n)\}} \le \1_{\{X^n_s \notin K_n\}}$, and thus, \eqref{eq:CUI-condition} implies \eqref{eq:UI-condition}.
		
		\item \label{item:3} Let $\cX$ be finite dimensional. If $\{X^n_s : s \in S_n, n \ge 1\}$ is $(\nu_n)$-$\UI(p)$ w.r.t. $\{h(n)\}$, then it is $(\nu_n)$-$\CUI(p)$ w.r.t. $\{K_n\}$ for $K_n : = \{x \in \cX : \|x\| \le h(n)\}$. This can be argued by noting that the set $\{x \in \cX : \|x\| \le h(n)\}$ is compact.

		\item \label{item:4} If $\{X^n_s : s \in S_n, n \ge 1\}$ is $(\nu_n)$-$\UI(p)$ w.r.t. $\{h(n)\}$ (resp. $(\nu_n)$-$\CUI(p)$ w.r.t. $\{K_n\}$), then it is $(\nu_n)$-$\UI(q)$ w.r.t. $\{h(n)\}$ (resp. $(\nu_n)$-$\CUI(q)$ w.r.t. $\{K_n\}$) for any $0< q \le p$. Indeed, by H\"older's inequality,
		\begin{align*}
			\bE \int_{S_n} \lt\|X^n_s\rt\|^q \1_{\{\|X^n_s\| > h(n)\}} \nu_n(\od s) & \le \nu_n(S_n)^{1 - \frac{q}{p}} \lt( \bE \int_{S_n} \lt\|X^n_s\rt\|^p \1_{\{\|X^n_s\| > h(n)\}} \nu_n(\od s) \rt)^{\frac{q}{p}}\\
			& \le C_\nu^{1 - \frac{q}{p}} \lt( \bE \int_{S_n} \lt\|X^n_s\rt\|^p \1_{\{\|X^n_s\| > h(n)\}} \nu_n(\od s) \rt)^{\frac{q}{p}}\\
			& \to 0 \quad \trm{as } n \to \infty.
		\end{align*}
		The argument for the CUI condition is similar.
	\end{enumerate}
\end{rema}

Several characterizations of $(\nu_n)$-$\CUI(p)$ are provided in \cite[Theorem 13]{GGT25}. We only formulate here the characterization that is used in this article.

\begin{prop}[\cite{GGT25}]\label{prop:charac-CUI}
Let $\{X^n_s : s \in S_n, n\ge 1\}$ be a collection of $\cX$-valued random elements such that $(\omega, s) \mapsto X^n_s(\omega)$ is $\cF\otimes \cS_n/\cB(\cX)$-measurable for all $n \ge 1$. Let $p \in (0, \infty)$. Then, the following assertions are equivalent:
\begin{enumerate}[\rm (i)]
\item $\{X^n_s : s \in S_n, n\ge 1\}$ is $(\nu_n)$-$\CUI(p)$.

\item $\{X^n_s : s \in S_n, n\ge 1\}$ is uniformly tight, which means
\begin{align*}
\forall \ep >0, \, \exists K_\ep \in \cK(\cX)\; :\; \sup_{n \ge 1} (\bP \otimes \nu_n) (X^n_s \notin K_\ep) \le \ep,
\end{align*}
and that 
\begin{align*}
\lim_{a \to \infty} \sup_{n \ge 1} \bE \int_{S_n} \|X^n_s\|^p \, \1_{\{\|X^n_s\| > a\}} \nu_n(\od s) =  0.
\end{align*}
\end{enumerate} 
\end{prop}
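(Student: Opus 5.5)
We prove the two implications separately. Write $\mu_n := \bP\otimes\nu_n$ on $\Omega\times S_n$; by the joint measurability assumption and Tonelli, $\bE\int_{S_n} f(X^n_s)\,\nu_n(\od s) = \int f(X^n_s(\omega))\,\mu_n(\od\omega,\od s)$ for every non-negative Borel $f$ on $\cX$, and $\mu_n$ has total mass $\nu_n(S_n)\le C_\nu$ by \eqref{eq:finite-spaces}. So everything reduces to statements about the image measures $\mu_n\circ (X^n)^{-1}$ on $\cX$, a uniformly bounded family of finite measures.

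\emph{(i) $\Rightarrow$ (ii).} Fix $\ep>0$ and let $K_\ep$ be as in \cref{def:CUI-uniform}. For tightness, I would first use (i) with a compact $K=K_{\ep_1}$ and split according to whether $\|x\|\le 1$ or not. Then $\mu_n(X^n_s\notin K,\ \|X^n_s\|>1)\le \bE\int\|X^n_s\|^p\1_{\{X^n_s\notin K\}}\,\od\nu_n\le\ep_1$. The set where $\|X^n_s\|\le 1$ but $X^n_s\notin K$ is the obstacle, since there the weight $\|x\|^p$ gives no control; points near $0$ are invisible to the CUI condition. I handle this by enlarging the compact: take $K' := K\cup\{0\}$. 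That does not help directly with $\mu_n(\|X^n_s\|\le1, X^n_s\notin K')$ either, so instead I replace the threshold by $\delta$. For $\|x\|>\delta$ we get the bound $\mu_n\le \delta^{-p}\ep_1$. For $\|x\|\le\delta$, no compact control is possible from (i) alone.

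Hence tightness in the literal sense of (ii) seems genuinely to need more. Concretely, if $X^n_s$ takes values $\delta e_k$ with $\delta\to0$ along an infinite orthonormal-like set, (i) holds. The resulting family is, however, still tight, because $\{0\}\cup\{\delta_k e_k\}$ with $\delta_k\to0$ is compact. The plan is therefore to build the compact set as
\[
\tilde K := \{0\}\cup \bigcup_{j\ge1} 2^{-j}\text{-scaled pieces}, \qquad \tilde K := \{0\}\cup\bigcup_{j} \bigl(K_{\ep 2^{-jp-j}}\cap\{\|x\|\ge 2^{-j}\}\bigr).
\]
Each piece is compact and the pieces accumulate only at $0$, so $\tilde K$ is compact. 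We then have $\mu_n(X^n_s\notin\tilde K)\le \sum_j \mu_n(\|X^n_s\|\in[2^{-j},2^{-j+1}),\,X^n_s\notin K_{\ep2^{-jp-j}}) \le \sum_j 2^{jp}\ep 2^{-jp-j}=\ep$. Verifying compactness of $\tilde K$ (sequences either stay in finitely many pieces or tend to $0$) is the key step.

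For the second condition of (ii), given $\ep$ take $a>\|K_\ep\|$. Then $\{\|x\|>a\}\subset K_\ep^c$, so the supremum over $n$ of the truncated moment is at most $\ep$.

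\emph{(ii) $\Rightarrow$ (i).} Given $\ep$, choose $a$ with $\sup_n\bE\int\|X^n_s\|^p\1_{\{\|X^n_s\|>a\}}\,\od\nu_n\le\ep/2$, and a compact $K$ with $\sup_n\mu_n(X^n_s\notin K)\le \ep/(2a^p)$. Then
\[
\bE\int_{S_n}\|X^n_s\|^p\1_{\{X^n_s\notin K\}}\,\nu_n(\od s)\le a^p\mu_n(X^n_s\notin K)+\bE\int_{S_n}\|X^n_s\|^p\1_{\{\|X^n_s\|>a\}}\,\nu_n(\od s)\le\ep
\]
uniformly in $n$, which is (i). The main obstacle is thus the tightness part of (i) $\Rightarrow$ (ii), handled by the dyadic-shell compact set above.
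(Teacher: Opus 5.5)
The paper does not prove this proposition itself (it is quoted from \cite{GGT25}), so your argument has to stand on its own. Your implication (ii)$\Rightarrow$(i) is correct, and so is the moment half of (i)$\Rightarrow$(ii).

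The tightness half of (i)$\Rightarrow$(ii), which you rightly single out as the crux, fails as written. The set $\tilde K=\{0\}\cup\bigcup_{j\ge1}\bigl(K_{\ep_j}\cap\{\|x\|\ge2^{-j}\}\bigr)$, with $\ep_j:=\ep2^{-jp-j}$, is in general \emph{not} compact. The constraint $\|x\|\ge2^{-j}$ is a lower bound on the norm, so the $j$-th piece is all of $K_{\ep_j}$ minus a small ball. The pieces therefore do not accumulate only at $0$, and your justification that sequences ``either stay in finitely many pieces or tend to $0$'' has no basis.

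This is not an artefact of an unlucky choice of the $K_{\ep_j}$. Take $\cX=\ell^1$, $S_n=\{1\}$, $\nu_n=\delta_1$ and $X^n_1:=X$ with $\bP(X=e_k)=2^{-k}$, $k\ge1$. Then (i) holds, and any admissible $K_{\ep_j}$ with $\ep_j<2^{-k}$ must contain $e_k$. Since $\|e_k\|=1\ge2^{-j}$, every $e_k$ lies in $\tilde K$, while $\|e_k-e_l\|_{\ell^1}=2$ for $k\ne l$, so $\tilde K$ is not compact.

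Your displayed estimate also does not match the set you defined. It sums only over the shells $2^{-j}\le\|x\|<2^{1-j}$, $j\ge1$, which cover only $0<\|x\|<1$. It is therefore the estimate for a different set, and it silently drops the event $\{\|X^n_s\|\ge1\}$.

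The repair is to cap the norm in each piece and handle large norms with one extra compact set:
\begin{itemize}
\item Put $\ep_j:=\ep2^{-jp-j-1}$ and $P_j:=K_{\ep_j}\cap\{2^{-j}\le\|x\|\le2^{1-j}\}$, which is compact as a closed subset of a compact set.
\item Set $\tilde K:=K_{\ep/2}\cup\{0\}\cup\bigcup_{j\ge1}P_j$.
\item Compactness: a sequence in $\tilde K$ either has infinitely many terms in one of the compact sets $K_{\ep/2}$, $\{0\}$, $P_j$, or has a subsequence lying in $P_{j_m}$ with $j_m\to\infty$. In the latter case the norms are at most $2^{1-j_m}\to0$.
\item Estimate: if $X^n_s\notin\tilde K$, then either $\|X^n_s\|\ge1$ and $X^n_s\notin K_{\ep/2}$, or $2^{-j}\le\|X^n_s\|<2^{1-j}$ and $X^n_s\notin K_{\ep_j}$ for a unique $j\ge1$. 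On these events bound $\1$ by $\|X^n_s\|^p$, respectively by $2^{jp}\|X^n_s\|^p$. This gives $(\bP\otimes\nu_n)(X^n_s\notin\tilde K)\le\ep/2+\sum_{j\ge1}2^{jp}\ep_j=\ep$ uniformly in $n$.
\end{itemize}
With this change your proof is complete.
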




\section{Main results}\label{sec:main-results}

We first show that the UI/CUI condition with order $p \in [1, \infty)$ implies the well-definedness of $\int_{S_n} X_s\, \nu_n(\od s)$  for sufficiently large $n$ almost surely.
\begin{lemm}\label{lem:existence-integral}
	Let $\{h(n) : n \ge 1\} \subset [0, \infty)$, $\{K_n : n \ge 1\} \subseteq \cK(\cX)$, and  $p \in [1, \infty)$. Assume that $\{X^n_s : s \in S_n, n \ge 1\}$ is either $(\nu_n)$-$\UI(p)$ w.r.t. $\{h(n)\}$ or  $(\nu_n)$-$\CUI(p)$ w.r.t. $\{K_n\}$. Then, there is $n_0 \in \bN$ such that $\bE \int_{S_n} \lt\|X^n_s\rt\|^p \nu_n(\od s) < \infty$ for all $n \ge n_0$. Consequently,  for any $n \ge n_0$, one has $\bE \int_{S_n} \lt\|X^n_s\rt\| \nu_n(\od s) < \infty$, and thus:
	\begin{enumerate}[\rm (1)]
	\item\label{item:lem:Fubini:1} $s \mapsto \bE X^n_s$ is $\cS_n/\cB(\cX)$-measurable and Bochner $\nu_n$-integrable.
	
	\item\label{item:lem:Fubini:2} $\omega \mapsto \int_{S_n} X^n_s \, \nu_n(\od s)$ is $\cF/\cB(\cX)$-measurable and Bochner $\bP$-integrable.
	
	\item\label{item:lem:Fubini:3} $\int_{S_n} \lt\|X^n_s\rt\| \nu_n(\od s) < \infty$ a.s. and $\bE\|X^n_s\| < \infty$ for $\nu_n$-a.e. $s \in S_n$.
	\end{enumerate}
\end{lemm}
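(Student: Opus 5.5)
By \cref{rema:property}\eqref{item:2}, the $(\nu_n)$-$\CUI(p)$ case w.r.t. $\{K_n\}$ reduces to the $(\nu_n)$-$\UI(p)$ case w.r.t. $h(n):=\|K_n\|$. So I will only treat the UI case. By \eqref{eq:UI-condition}, there is $n_0$ such that
\begin{align*}
\bE\int_{S_n}\|X^n_s\|^p\1_{\{\|X^n_s\|>h(n)\}}\nu_n(\od s)\le 1 \quad \text{for all } n\ge n_0.
\end{align*}
Splitting on $\{\|X^n_s\|\le h(n)\}$ and its complement then gives, for $n\ge n_0$,
\begin{align*}
\bE\int_{S_n}\|X^n_s\|^p\nu_n(\od s)\le h(n)^p\,\nu_n(S_n)+1\le h(n)^pC_\nu+1<\infty,
\end{align*}
using \eqref{eq:finite-spaces}. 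The joint measurability in \cref{def:UI}\eqref{item:def:UI:measurability} makes $(\omega,s)\mapsto\|X^n_s(\omega)\|^p$ measurable on the product, so the iterated integral is meaningful by Tonelli.

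Since $p\ge1$, I pass from $p$ to $1$ using H\"older's inequality on the finite measure $\bP\otimes\nu_n$, whose total mass is at most $C_\nu$. This gives $\bE\int_{S_n}\|X^n_s\|\nu_n(\od s)\le C_\nu^{1-1/p}\big(\bE\int_{S_n}\|X^n_s\|^p\nu_n(\od s)\big)^{1/p}<\infty$. Alternatively, one can use $\|x\|\le 1+\|x\|^p$.

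Item \eqref{item:lem:Fubini:3} then follows from Tonelli applied to the nonnegative jointly measurable function $\|X^n_s(\omega)\|$. Both iterated integrals are finite, so the inner integrals are finite a.s. and $\nu_n$-a.e., respectively.

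For \eqref{item:lem:Fubini:1} and \eqref{item:lem:Fubini:2}, I use the Bochner Fubini theorem (see \cite{HNVW16}): a jointly measurable $\cX$-valued map on a product of finite measure spaces whose norm is integrable has Bochner-measurable, integrable sections-integrals. Since $\cX$ is separable, Borel measurability coincides with strong measurability, so $(\omega,s)\mapsto X^n_s(\omega)$ is strongly $\bP\otimes\nu_n$-measurable and Bochner integrable.

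Fubini then says the following. First, $s\mapsto\bE X^n_s$, defined for $\nu_n$-a.e. $s$ by \eqref{item:lem:Fubini:3} and set to $0$ elsewhere, is strongly measurable and $\nu_n$-integrable. Second, $\omega\mapsto\int_{S_n}X^n_s\,\nu_n(\od s)$, defined a.s. and extended by $0$, is strongly measurable and $\bP$-integrable. Completeness of $\bP$ makes this modification harmless for $\cF$-measurability. Strong measurability together with separability of $\cX$ gives $\cS_n/\cB(\cX)$- and $\cF/\cB(\cX)$-measurability.

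The main obstacle is this measurability bookkeeping in the Bochner Fubini step, in particular the null exceptional sets on which the sections fail to be integrable. I would handle it via separability (the Pettis measurability theorem) and approximation by simple functions. The estimates themselves are routine.
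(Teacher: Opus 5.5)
Your proof is correct and follows essentially the same route as the paper: you split on the truncation event to bound $\bE\int_{S_n}\|X^n_s\|^p\,\nu_n(\od s)$ by $h(n)^pC_\nu+1$, pass from $p$ to $1$ by H\"older on $\bP\otimes\nu_n$, and then invoke Tonelli and the Bochner--Fubini theorem. The differences are cosmetic. You reduce the CUI case to the UI case via $h(n):=\|K_n\|$, whereas the paper treats CUI directly (with $\|K_n\|$ in place of $h(n)$) and calls the UI case analogous. You also handle the null sets on which $\bE X^n_s$ or $\int_{S_n}X^n_s\,\nu_n(\od s)$ is undefined, a detail the paper leaves implicit.
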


\begin{proof} We only give the proof for the CUI condition, the argument for  UI is analogous. Due to \eqref{eq:CUI-condition}, there exists $n_0 \in \bN$ such that for all $n \ge n_0$,
	\begin{align*}
		\bE\int_{S_n} \lt\|X^n_s\rt\|^p \1_{\{X^n_s \notin K_n\}} \nu_n(\od s) \le 1.
	\end{align*}
Then, for $n \ge n_0$, one has
\begin{align*}
	\bE\int_{S_n} \lt\|X^n_s\rt\|^p  \nu_n(\od s)  & = \bE\int_{S_n} \lt\|X^n_s\rt\|^p \1_{\{X^n_s \in K_n\}} \nu_n(\od s) + \bE\int_{S_n} \lt\|X^n_s\rt\|^p \1_{\{X^n_s \notin K_n\}} \nu_n(\od s) \\
	& \le \|K_n\|^p \nu_n(S_n) + 1 \le \lt\|K_n\rt\|^p C_\nu + 1 <\infty.
\end{align*}
Applying H\"older's inequality and noting that $\nu_n(S_n) <\infty$, we obtain $\bE \int_{S_n} \lt\|X^n_s\rt\| \nu_n(\od s) < \infty$. Items \ref{item:lem:Fubini:1} and \ref{item:lem:Fubini:2}  are direct application of Fubini's theorem for Bochner integrals (see, e.g., \cite[Proposition 1.2.7]{HNVW16}), and item \ref{item:lem:Fubini:3}  is a standard Fubini argument.
\end{proof}

\subsection{Mean convergence under decaying index masses} Our first result gives convergence in $L^p$ under a mass-decaying condition.
\begin{theo}\label{thm:decay-mass}
	Let $\{h(n) : n \ge 1\} \subset [0, \infty)$ and $p \in [1, \infty)$. Assume that $\{X^n_s : s \in S_n, n \ge 1\}$ is $(\nu_n)$-$\UI(p)$ w.r.t. $\{h(n)\}$. If
	\begin{align}\label{eq:thm:decay-mass}
		\lim_{n \rightarrow \infty} h(n) \nu_n(S_n)=0,
	\end{align}
	then
	\begin{align}\label{eq:thm:decay-masses}
	\lim_{n \to \infty} \mathbb{E}  \lt\| \int_{S_n} X^n_s \, \nu_n(\od s) \rt\|^p  = 0,
	\end{align}
	and in particular, $\int_{S_n} X^n_s \, \nu_n(\od s)  \to 0$ in probability  as $n \to \infty$.
\end{theo}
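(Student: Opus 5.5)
By \cref{lem:existence-integral}, for $n \ge n_0$ the Bochner integral $Y_n := \int_{S_n} X^n_s\,\nu_n(\od s)$ is a well-defined random element. The plan is to truncate at the threshold $h(n)$ and bound the two pieces separately.

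For $n\ge n_0$, on the almost sure event where $\int_{S_n}\|X^n_s\|\,\nu_n(\od s)<\infty$, write
\begin{align*}
Y_n = \int_{S_n} X^n_s \1_{\{\|X^n_s\|\le h(n)\}}\,\nu_n(\od s) + \int_{S_n} X^n_s \1_{\{\|X^n_s\|> h(n)\}}\,\nu_n(\od s) =: Y_n' + Y_n''.
\end{align*}
Both integrands are jointly measurable, since $\{(\omega,s): \|X^n_s(\omega)\|>h(n)\}$ lies in $\cF\otimes\cS_n$. Both are also Bochner integrable, being dominated by $\|X^n_s\|$.

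For the bounded part, the norm inequality for Bochner integrals gives $\|Y_n'\|\le h(n)\,\nu_n(S_n)$ deterministically. Hence $\bE\|Y_n'\|^p\le (h(n)\nu_n(S_n))^p\to 0$ by \eqref{eq:thm:decay-mass}.

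For the tail part, again $\|Y_n''\|\le \int_{S_n}\|X^n_s\|\1_{\{\|X^n_s\|>h(n)\}}\,\nu_n(\od s)$. Since $p\ge1$, Jensen's (or H\"older's) inequality applied to the normalized measure $\nu_n/\nu_n(S_n)$ gives
\begin{align*}
\|Y_n''\|^p \le \nu_n(S_n)^{p-1}\int_{S_n}\|X^n_s\|^p\1_{\{\|X^n_s\|>h(n)\}}\,\nu_n(\od s).
\end{align*}
If $\nu_n(S_n)=0$, both sides vanish. Taking expectations, using $\nu_n(S_n)\le C_\nu$ from \eqref{eq:finite-spaces} and applying \eqref{eq:UI-condition}, we get $\bE\|Y_n''\|^p\le C_\nu^{p-1}\,\bE\int_{S_n}\|X^n_s\|^p\1_{\{\|X^n_s\|>h(n)\}}\,\nu_n(\od s)\to0$.

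Finally, $\|Y_n\|^p\le 2^{p-1}(\|Y_n'\|^p+\|Y_n''\|^p)$, which yields \eqref{eq:thm:decay-masses}. Convergence in probability then follows from Markov's inequality.

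There is no substantial obstacle here. The only points needing care are the measurability and integrability bookkeeping, which \cref{lem:existence-integral} already supplies for $n\ge n_0$, and the observation that the statement only concerns the limit, so the indices $n<n_0$, where $Y_n$ may be undefined, are irrelevant.
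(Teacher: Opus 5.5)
Your proposal is correct and follows essentially the same route as the paper: truncate at $h(n)$, bound the truncated part deterministically by $h(n)\nu_n(S_n)$, and control the tail part via H\"older/Jensen by $C_\nu^{p-1}$ times the quantity in \eqref{eq:UI-condition}. The only cosmetic difference is that the paper combines the two pieces with Minkowski's inequality in $L^p(\bP)$, whereas you use the elementary bound $\|a+b\|^p\le 2^{p-1}(\|a\|^p+\|b\|^p)$; either works equally well.
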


\begin{proof}
For $n_0$ in \cref{lem:existence-integral}, applying the triangle inequality we obtain for all $n \ge n_0$ that
\begin{align*}
	&\lt( \mathbb{E}  \lt\| \int_{S_n} X^n_s \, \nu_n(\od s) \rt\|^p\rt)^{\frac{1}{p}} \\
	& \le \lt( \mathbb{E}  \lt\| \int_{S_n} X^n_s \1_{\{\|X^n_s\| \le h(n)\}} \, \nu_n(\od s) \rt\|^p\rt)^{\frac{1}{p}} + \lt( \mathbb{E}  \lt\| \int_{S_n} X^n_s \1_{\{\|X^n_s\| > h(n)\}} \, \nu_n(\od s) \rt\|^p\rt)^{\frac{1}{p}}\\
	& \le \lt( \mathbb{E}  \lt| \int_{S_n} \|X^n_s\| \1_{\{\|X^n_s\| \le h(n)\}} \, \nu_n(\od s) \rt|^p\rt)^{\frac{1}{p}} + \lt( \mathbb{E}  \lt| \int_{S_n} \|X^n_s\| \1_{\{\|X^n_s\| > h(n)\}} \, \nu_n(\od s) \rt|^p\rt)^{\frac{1}{p}}\\
	& \le h(n)  \nu_n(S_n) + \nu_n(S_n)^{\frac{1}{q}} \lt( \mathbb{E}   \int_{S_n} \lt\|X^n_s\rt\|^p \1_{\{\|X^n_s\| > h(n)\}} \, \nu_n(\od s) \rt)^{\frac{1}{p}}\\
	&\le h(n) \nu_n(S_n) + C_\nu^{\frac{1}{q}} \lt( \mathbb{E}   \int_{S_n} \lt\|X^n_s\rt\|^p \1_{\{\|X^n_s\| > h(n)\}} \, \nu_n(\od s) \rt)^{\frac{1}{p}},
\end{align*}	
where we apply H\"older's inequality with $\frac{1}{q} : = 1 - \frac{1}{p} \in [0, 1)$ for the second term on the right-hand side to get the last estimate. Then, \eqref{eq:thm:decay-masses} immediately follows from \eqref{eq:UI-condition} and \eqref{eq:thm:decay-mass}.
\end{proof}

\begin{coro}\label{cor:decay-mass-CUI}
	Let $\{K_n : n \ge 1\} \subseteq \cK(\cX)$ and $p \in [1, \infty)$. Assume that $\{X^n_s : s \in S_n, n \ge 1\}$ is $(\nu_n)$-$\CUI(p)$ w.r.t. $\{K_n\}$. If
	\begin{align}\label{eq:thm:decay-mass-CUI}
		\lim_{n \rightarrow \infty} \lt\|K_n\rt\| \nu_n(S_n)=0,
	\end{align}
	then \eqref{eq:thm:decay-masses} holds true.
\end{coro}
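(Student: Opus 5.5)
The corollary follows by reducing it to \cref{thm:decay-mass} through \cref{rema:property}\ref{item:2}. Set $h(n) := \|K_n\|$ for every $n \ge 1$. This is a well-defined non-negative real number, since each $K_n$ is a nonempty compact set and the norm attains its maximum on it.

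The first step is to check that the collection is $(\nu_n)$-$\UI(p)$ w.r.t. $\{h(n)\}$. The joint measurability condition in \cref{def:UI}\ref{item:def:UI:measurability} is the same as the one in \cref{def:CUI}\ref{item:def:CUI:measurability}. For the limit condition, note the pointwise inequality $\1_{\{\|X^n_s\| > \|K_n\|\}} \le \1_{\{X^n_s \notin K_n\}}$: if $\|X^n_s\| > \|K_n\|$, then $X^n_s$ cannot lie in $K_n$. Multiplying by $\|X^n_s\|^p$, integrating against $\bP\otimes\nu_n$, and invoking \eqref{eq:CUI-condition} then gives \eqref{eq:UI-condition}.

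The second step is to observe that assumption \eqref{eq:thm:decay-mass-CUI} is exactly \eqref{eq:thm:decay-mass} with this choice of $h(n)$. Since $p \in [1,\infty)$, \cref{thm:decay-mass} applies directly and yields \eqref{eq:thm:decay-masses}. It also yields convergence in probability as a by-product.

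No step here is a real obstacle; the argument is purely a reduction. The only points to be careful about are that $h(n)$ is finite and non-negative, and that the integrals are well defined for large $n$. The latter is already handled by \cref{lem:existence-integral}, which is used inside the proof of \cref{thm:decay-mass}.
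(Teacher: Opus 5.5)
Your proposal is correct and is exactly the paper's argument. The paper likewise sets $h(n) := \|K_n\|$, uses the inclusion $\{\|X^n_s\| > \|K_n\|\} \subseteq \{X^n_s \notin K_n\}$ (recorded in \cref{rema:property}) to pass from $(\nu_n)$-$\CUI(p)$ w.r.t.\ $\{K_n\}$ to $(\nu_n)$-$\UI(p)$ w.r.t.\ $\{h(n)\}$, and then applies \cref{thm:decay-mass}.
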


\begin{proof}
	It immediately follows from \cref{thm:decay-mass} and \cref{rema:property}\eqref{item:2}.
\end{proof}

\begin{coro}\label{cor:decay-mass}
	Let $\{K_n : n \ge 1\} \subseteq \cK(\cX)$ and $p \in [1, \infty)$. Let $\{u_n, v_n : n \ge 1\} \subseteq \bZ\cup\{\pm \infty\}$ with $u_n < v_n$ for all $n\ge 1$ and let $\{a_{n, k} : k \in [u_n, v_n]\cap \bZ, n \ge 1\} \subset \bR$ satisfy 
	$$\sup_{n \ge 1} \sum_{k \in [u_n, v_n] \cap \bZ} |a_{n, k}| <\infty.$$
Assume that $\{X^n_k : k\in S_n, n \ge 1\}$ is $(\nu_n)$-$\CUI(p)$ w.r.t. $\{K_n\}$, where $S_n: = [u_n, v_n] \cap \bZ$, $\cS_n : = \cP(S_n)$, and $\nu_n : = \sum_{k \in S_n} |a_{n, k}| \delta_{k}$. If 
	\begin{align}\label{eq:cor:decay-mass}
		\lim_{n \to \infty} \lt\|K_n\rt\| \sum_{k \in S_n} |a_{n, k}| = 0,
	\end{align}
then
\begin{align*}
	\lim_{n \to \infty} \mathbb{E} \, \BB\| \sum_{k \in S_n} a_{n, k} X^n_k \BB\|^p  = 0.
\end{align*}
\end{coro}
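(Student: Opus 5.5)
The plan is to obtain this as a direct specialization of \cref{cor:decay-mass-CUI}. I take $S_n := [u_n, v_n]\cap\bZ$, $\cS_n := \cP(S_n)$ and $\nu_n := \sum_{k\in S_n}|a_{n,k}|\delta_k$. Then $\nu_n(S_n) = \sum_{k\in S_n}|a_{n,k}|$, so the assumption $\sup_n \sum_k |a_{n,k}| <\infty$ gives the finiteness needed in \eqref{eq:finite-spaces}. If some $\nu_n$ vanish identically, so that $C_\nu$ might be $0$, I would either discard those $n$, since the sum is then $0$, or note that positivity of $C_\nu$ plays no role in the argument. Joint measurability is automatic by \cref{rema:property}\eqref{item:1}, since $S_n$ is countable.

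The hypothesis \eqref{eq:cor:decay-mass} is exactly \eqref{eq:thm:decay-mass-CUI}. Hence \cref{cor:decay-mass-CUI} yields $\bE\|\int_{S_n} X^n_s\,\nu_n(\od s)\|^p\to0$. However, $\int_{S_n} X^n_k\,\nu_n(\od k)$ equals $\sum_k |a_{n,k}| X^n_k$, not $\sum_k a_{n,k}X^n_k$. The way around this is to absorb the signs into the random elements. Set $Y^n_k := \operatorname{sgn}(a_{n,k}) X^n_k$, with the convention $Y^n_k := X^n_k$ when $a_{n,k}=0$.

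The key step is then to check that $\{Y^n_k\}$ is still $(\nu_n)$-$\CUI(p)$ with respect to suitable compact sets having the same metric size. Since $Y^n_k$ may equal $-X^n_k$, I replace $K_n$ by $K_n' := K_n\cup(-K_n)$. This set is compact, satisfies $\|K_n'\| = \|K_n\|$, and has $\{Y^n_k\notin K_n'\}\subseteq\{X^n_k\notin K_n\}$. Together with $\|Y^n_k\| = \|X^n_k\|$, this shows that the CUI integrals for $Y$ are bounded by those for $X$, so \eqref{eq:CUI-condition} transfers. Condition \eqref{eq:thm:decay-mass-CUI} for $K_n'$ is identical to the original one.

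Finally, for $n\ge n_0$ from \cref{lem:existence-integral}, the sum $\sum_k |a_{n,k}|\,\|X^n_k\|$ converges a.s. This needs a little care when $S_n$ is infinite: the Bochner integral with respect to the discrete measure $\nu_n$ must be identified with the absolutely convergent series $\sum_k a_{n,k} X^n_k$. That identification is the only mildly delicate point, and I would handle it by dominated convergence for Bochner integrals applied to finite truncations. We then get $\int_{S_n} Y^n_k\,\nu_n(\od k) = \sum_k a_{n,k}X^n_k$ a.s., and applying \cref{cor:decay-mass-CUI} to $\{Y^n_k\}$ gives the conclusion.
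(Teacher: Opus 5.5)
Your proof is correct, but it handles the signs of $a_{n,k}$ differently from the paper.

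\textbf{The paper's route.} It keeps the random elements and the compact sets $K_n$ unchanged and splits the measure instead, setting $\nu_n^{\pm} := \sum_{k} a_{n,k}^{\pm}\delta_k$. Since $\nu_n^{\pm}\le \nu_n$, the family is $(\nu_n^{\pm})$-$\CUI(p)$ w.r.t.\ $\{K_n\}$, and $\|K_n\|\,\nu_n^{\pm}(S_n)\to 0$. The paper therefore applies \cref{cor:decay-mass-CUI} twice and recombines the two pieces with the triangle inequality in $L^p(\bP;\cX)$.

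\textbf{Your route.} You keep $\nu_n=\sum_k |a_{n,k}|\delta_k$ and move the signs into $Y^n_k=\operatorname{sgn}(a_{n,k})X^n_k$. The price is symmetrizing the compacts to $K_n\cup(-K_n)$, which costs nothing here because only $\|K_n\|$ enters \eqref{eq:thm:decay-mass-CUI}. This gives a single application of \cref{cor:decay-mass-CUI} with no recombination step. You are also more explicit than the paper in identifying the Bochner integral with the absolutely convergent series when $S_n$ is infinite.

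\textbf{What the paper's route buys.} It transfers verbatim to \cref{cor:bounded-mass}. There the structural hypothesis is diagonal negative dependence of the $X^n_k$. That property is untouched by changing the measure, but it is not preserved when only one element of a pair has its sign flipped. For instance, sampling without replacement from $\{v,-v\}$ with $v\neq 0$ gives $X_2=-X_1$. Then $(X_1,X_2)$ is diagonally negatively dependent by \cref{exam:sampling-without-replacement}, whereas $(X_1,-X_2)=(X_1,X_1)$ is not. So your sign-absorption trick is specific to the decaying-mass setting, where no dependence structure is needed.
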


\begin{proof}
Define $\nu_n^{\pm} : = \sum_{k \in S_n} a_{n, k}^{\pm} \delta_{k}$, where $a^+_{n, k} : = \max\{a_{n, k}, 0\}$ and $a^-_{n, k} : = \max\{-a_{n, k}, 0\}$. For $\mu_n \in \{\nu_n^+, \nu_n^-\}$, we have $\mu_n(A) \le \nu_n(A)$ for any $A\in \cS_n$, which verifies that $\{X^n_k : k \in S_n, n \ge 1\}$ is $(\mu_n)$-$\CUI(p)$ w.r.t. $\{K_n\}$, here we use \cref{rema:property}\eqref{item:1} to get the joint measurability. Moreover, \eqref{eq:cor:decay-mass} ensures that $\lt\|K_n\rt\| \mu_n(S_n) \to 0$. Applying \cref{cor:decay-mass-CUI} yields
\begin{align*}
	\lim_{n \to \infty} \mathbb{E}  \, \BB\| \sum_{k \in S_n} a_{n, k}^{\pm} X^n_{k} \BB\|^p  = 0,
\end{align*}
which then completes the proof by $a_{n, k} = a_{n, k}^+ - a_{n, k}^-$ and the triangle inequality.
\end{proof}

\begin{rema}\label{rema:simplify-CRUV20}
	\cref{cor:decay-mass} not only generalizes  but also refines \cite[Theorem 2]{CRUV20}. Specifically, compared with \cite[Theorem 2]{CRUV20}, \cref{cor:decay-mass} (with $p = 1$) does not require the martingale difference condition, and the factor $N(K_n, \ep)$ (which tends to infinity when $K_n$ grows) is not necessarily imposed on the left-hand side of \eqref{eq:cor:decay-mass}. Moreover, our proof provides a simpler route to achieve the desired convergence (the technique in the proof of \cref{thm:decay-mass} can be directly applied to \cite[Theorem 2]{CRUV20}). 
	
	We also note that the absence of the covering factor $N(K_n, \ep)$ is specific to the decaying index mass setting in \cref{cor:decay-mass-CUI}, whereas for bounded index masses in \cref{thm:bounded-masses} below we show in \cref{exam:covering-necessary} that this factor cannot be removed.
\end{rema}

\subsection{Mean convergence under bounded index masses}
For condition \eqref{eq:thm:decay-mass-CUI} in \cref{cor:decay-mass-CUI}, it typically happens that $\|K_n\|$ is bounded in $n$ or $ \|K_n\| \to \infty$ which then enforces $\nu_n(S_n) \to 0$ as $n \to \infty$. Since the requirement $\nu_n(S_n) \to 0$ is too restrictive for applications, in the following we provide a setting in which such a   decaying index mass condition is not necessarily satisfied.

Let us first introduce the following dependence structure for random elements. Note that this notion can be naturally formulated in general metric spaces also.

\begin{defi}[Diagonal negative dependence] \mbox{ }
\begin{enumerate}[(i)]
\item Two $\cX$-valued random elements $X, Y$ are said to be \textit{diagonally negatively dependent} if 
\begin{align}\label{def:PDND}
\bP(X \in A, Y \in A) \le \bP(X \in A)\,\bP(Y \in A), \quad \forall A \in \cB(\cX),
\end{align}
or equivalently,
$\Cov(\1_{\{X \in A\}}, \1_{\{Y \in A\}}) \le 0$ for all $A \in \cB(\cX)$.

\item Let $\varnothing \neq I \subseteq \bR$ and $M \in \bN \cup \{0\}$.
A family $\{Y_i : i \in I\}$ of $\cX$-valued random elements is \textit{$M$-pairwise diagonally negatively dependent} if $Y_i, Y_j$ are diagonally negatively dependent for all $i, j \in I$ with $|i-j|>M$.
When $M=0$, we then simply call $\{Y_i : i \in I\}$ \textit{pairwise diagonally negatively dependent}.

\end{enumerate}
\end{defi}

The following lemma provides some basic properties of diagonal negative dependence.
\begin{lemm}\label{lem:DND-properties}
Let $X, Y$ be $\cX$-valued random elements.
\begin{enumerate}[\rm (i)]
\item If $X$ and $Y$ are independent, then they are diagonally negatively dependent.

\item $X, Y$ are diagonally negatively dependent if and only if $Y, X$ are. Moreover, \eqref{def:PDND} holds for a set $A \in \cB(\cX)$ if and only if it holds for $A^c$.

\item Let $(\cY, \|\cdot\|_\cY)$ be a separable Banach space and let $f \colon \cX \to \cY$ be $\cB(\cX)/\cB(\cY)$-measurable. If $X, Y$ are diagonally negatively dependent, then so are $f(X), f(Y)$.
\end{enumerate}
\end{lemm}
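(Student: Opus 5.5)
The three items are elementary and I would verify them directly from the definition \eqref{def:PDND}, one at a time.

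For (i), independence of $X$ and $Y$ gives $\bP(X\in A, Y\in A)=\bP(X\in A)\,\bP(Y\in A)$ for every $A\in\cB(\cX)$. In particular the inequality \eqref{def:PDND} holds, with equality.

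For (ii), the first claim is immediate because both sides of \eqref{def:PDND} are symmetric in $X$ and $Y$. For the complement claim, I would write $a:=\bP(X\in A)$, $b:=\bP(Y\in A)$ and $c:=\bP(X\in A, Y\in A)$. Inclusion–exclusion gives
\begin{align*}
\bP(X\in A^c, Y\in A^c) = 1-a-b+c,
\end{align*}
while
\begin{align*}
\bP(X\in A^c)\,\bP(Y\in A^c)=(1-a)(1-b)=1-a-b+ab.
\end{align*}
Hence the inequality for $A^c$ is equivalent to $c\le ab$, which is exactly the inequality for $A$. An equivalent way to see this is through covariances: $\1_{\{X\in A^c\}}=1-\1_{\{X\in A\}}$, and $\Cov(1-U,1-V)=\Cov(U,V)$. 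Since $(A^c)^c=A$, the equivalence runs in both directions.

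For (iii), I fix $B\in\cB(\cY)$ and set $A:=f^{-1}(B)$. Measurability of $f$ gives $A\in\cB(\cX)$. Then $\{f(X)\in B\}=\{X\in A\}$ and $\{f(Y)\in B\}=\{Y\in A\}$. Applying \eqref{def:PDND} to the set $A$ yields the required inequality for $f(X)$, $f(Y)$ and $B$. Measurability of $f$ also ensures that $f(X)$ and $f(Y)$ are $\cY$-valued random elements.

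None of these steps poses a real obstacle. The only point needing care is the inclusion–exclusion bookkeeping in (ii).
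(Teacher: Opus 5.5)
Your proposal is correct and follows the paper's argument: for (iii) the paper uses exactly your preimage step $\{f(X)\in B\}=\{X\in f^{-1}(B)\}$. For (i) and (ii) the paper simply calls them clear, and your direct check and inclusion--exclusion computation supply those routine details correctly.
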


\begin{proof}
The first two assertions are clear, so we only verify (iii). For any $B \in \cB(\cY)$, one has
\begin{align*}
\bP(f(X) \in B, f(Y) \in B) = \bP(X \in f^{-1}(B), Y \in f^{-1}(B)) \le \bP(f(X) \in B) \bP(f(Y) \in B),
\end{align*}
which completes the proof.
\end{proof}

\begin{rema}\label{rema:PDND} 
\begin{enumerate}[(i)]
\item The terminology \textit{diagonal} means that condition \eqref{def:PDND} is only imposed on the diagonal events $\{(X, Y) \in A\times A\}$, $A \in \cB(\cX)$, rather than on the events $\{(X, Y) \in A \times B\}$ of arbitrary products $A\times B$.

\item For every $M \in \bN \cup \{0\}$, the $M$-pairwise dependence of $\{Y_i : i \in I\}$ (i.e., $Y_i$ and  $Y_j$ are independent whenever $|i - j| > M$) clearly implies $M$-pairwise diagonal negative dependence, but the converse does not hold in general. We provide in \cref{exam:sampling-without-replacement} below a family of random elements appearing in the context of sampling without replacement from a finite population that is pairwise diagonally negatively dependent but neither pairwise independent nor a martingale difference sequence.

\item There are several well-studied dependence structures for real-valued random
variables such as  negative orthant/quadrant dependence \cite{Le66} and negative association \cite{JP83}, which crucially rely on the total order of the real line but none of them has a direct analogue in a general (separable) Banach space. Numerous results on mean convergence and laws of large numbers for UI/CUI random elements rely instead on (pairwise) independence or martingale difference, see, e.g., the recent articles \cite{BCS14, CRUV20, CCRV21, RT25} and the references therein. As far as we are aware, diagonal negative dependence appears to be a new dependence structure for random elements in a Banach space without geometric assumptions. Together with a CUI-type condition, this allows us to obtain $L^p$-convergence, $p \in [1, \infty)$.
\end{enumerate}
\end{rema}

\begin{exam}\label{exam:sampling-without-replacement} Let $2\le N \in \bN$ and let $V_N: = \{v_1, \ldots, v_N\}$ be a family of distinct elements in $\cX$. Fix $2\le n \in \bN$ with $n \le N$. We let $(X_1, \ldots, X_n)$ be the first $n$ draws of a uniform sample \textit{without replacement} from the set $V_N$. Denote $l(A) : = \#\{1\le i \le N : v_i \in A\}$ for $A \in \cB(\cX)$. For $i \ne j$, 
since both $X_i$ and $X_j$ are uniformly distributed on $V_N$, we get for any $A \in \cB(\cX)$ that
 \begin{align*}
 \bP(X_i \in A)  \bP(X_j \in A) =  \bP(X_1 \in A)^2 = \frac{l(A)^2}{N^2}. 
 \end{align*}
Note that 
\begin{align*}
\bP(X_i \in A, X_j \in A) = \frac{l(A)(l(A) - 1)}{N(N - 1)},
\end{align*}
from which we get
\begin{align*}
&\bP(X_i \in A, X_j \in A) -  \bP(X_i \in A)  \bP(X_j \in A) \\
& = \frac{l(A)(l(A) - 1)}{N(N - 1)} - \frac{l(A)^2}{N^2}\\
& = - \frac{l(A)}{N} \lt(1 - \frac{l(A)}{N}\rt) \frac{1}{N - 1}\\
& \le 0.
\end{align*}
The strict inequalities hold for those $A$ with $1 \le l(A) \le N -1$. Hence, $\{X_1, \ldots, X_n\}$ are pairwise diagonally negatively dependent but not pairwise independent.

\smallskip

Furthermore, the centered version of $\{X_1, \ldots, X_n\}$ is not a martingale difference in the sense that $\bE[X_{k+1} -\bE X_{k+1} \,|\, X_1, \ldots, X_k] \neq 0$ for any $k = 1, \ldots, n-1$. Indeed, one expresses
\begin{align*}
\bE[X_{k+1} - \bE X_{k+1} \, |\, X_1, \ldots, X_k] = \frac{1}{N-k} \sum_{v \in V_N \backslash \{X_1, \ldots, X_k\}} v - \frac{1}{N} \sum_{i=1}^N v_i,
\end{align*}
 which is not equal to zero in general. To see this, for instance, let $N$ be an even number and let $V_N = \{\pm v_i, i = 1, \ldots, N/2\}$ consist of linearly independent elements $v_i$, $i = 1, \ldots, N/2$. Then,  $\frac{1}{N}\sum_{i=1}^N v_i = 0$. However, for any $k < N$,  it is easy to check that $ \sum_{v \in V_N \backslash \{X_1, \ldots, X_k\}} v$ is non-zero with positive probability, which then yields the desired assertion.
\end{exam}

We are in a position to formulate the main result in this part.

\begin{theo}\label{thm:bounded-masses} Let $\{K_n : n \ge 1\} \subseteq \cK(\cX)$ and let $p \in [1, \infty)$. Assume that $\{X^n_s : s \in S_n, n \ge 1\}$ is $(\nu_n)$-$\CUI(p)$ w.r.t. $\{K_n\}$. Assume for any $n \ge 1$, there exists $D_n \in \cS_n \otimes \cS_n$ such that:
	\begin{enumerate}[\rm (a)]
		\item \label{item:thm:2:b:1} $\lim_{n \to \infty} N(K_n, \ep)^{1 - (\frac{1}{p} \wedge \frac{1}{2})} \lt\|K_n\rt\| ((\nu_n \otimes \nu_n) (D_n))^{\frac{1}{p} \wedge \frac{1}{2}} = 0$ for all $\ep >0$,
		
		\item \label{item:thm:2:b:2} $X^n_s$ and $X^n_t$ are diagonally negatively dependent for $(\nu_n \otimes \nu_n)$-a.e.\ $(s, t) \in (S_n \times S_n) \backslash D_n$.
	\end{enumerate}
	Then, we have 
	\begin{align}\label{eq:thm:bounded-masses}
		\lim_{n \to \infty} \bE \lt\| \int_{S_n} \lt(X^n_s - \bE X^n_s\rt) \nu_n(\od s) \rt\|^p = 0,
	\end{align}
	and in particular, $\int_{S_n} \lt(X^n_s - \bE X^n_s\rt) \nu_n(\od s)  \to 0$ in probability as $n \to \infty$.
\end{theo}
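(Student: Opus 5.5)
The plan is to truncate each $X^n_s$ at the prescribed compact set $K_n$, discretize the bounded part along a finite $\ep$-net of $K_n$, and control the resulting scalar sums through second moments, where diagonal negative dependence kills the far-pair region. Throughout I work with $n \ge n_0$ from \cref{lem:existence-integral}, so all Bochner integrals and expectations below are well defined. I write $\|Y\|_p := (\bE\|Y\|^p)^{1/p}$ and use Minkowski's inequality in $L^p(\bP;\cX)$.

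\textbf{Step 1 (tail part).} Split $X^n_s = X^n_s\1_{\{X^n_s\in K_n\}} + X^n_s\1_{\{X^n_s\notin K_n\}}$. For the second summand, including its centering, I argue exactly as in the proof of \cref{thm:decay-mass}. By Jensen and H\"older, its contribution to $\|\int_{S_n}(X^n_s-\bE X^n_s)\,\nu_n(\od s)\|_p$ is at most
\begin{align*}
2C_\nu^{1-\frac1p}\Big(\bE\int_{S_n}\|X^n_s\|^p\1_{\{X^n_s\notin K_n\}}\,\nu_n(\od s)\Big)^{\frac1p},
\end{align*}
which tends to $0$ by \eqref{eq:CUI-condition}.

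\textbf{Step 2 (discretization).} Fix $\ep>0$ and let $N:=N(K_n,\ep)$. Choose centers $y_1,\dots,y_N\in K_n$ of balls of radius $\ep$ covering $K_n$. Disjointify them into Borel sets $A_1,\dots,A_N$ partitioning $K_n$, with $A_j\subseteq B(y_j,\ep)$. Then
\begin{align*}
\Big\|X^n_s\1_{\{X^n_s\in K_n\}}-\sum_j y_j\1_{\{X^n_s\in A_j\}}\Big\|\le\ep
\end{align*}
pointwise. Together with its expectation, this error contributes at most $2\ep C_\nu$ to the $L^p$ norm. It remains to bound
\begin{align*}
\sum_{j=1}^N y_j Z_j,\qquad Z_j:=\int_{S_n}\big(\1_{\{X^n_s\in A_j\}}-\bP(X^n_s\in A_j)\big)\,\nu_n(\od s).
\end{align*}
Its norm is at most $\|K_n\|\sum_j|Z_j|$, and $|Z_j|\le C_\nu$.

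\textbf{Step 3 (second-moment bound via dependence).} By Fubini, which is justified by the joint measurability in \cref{def:CUI}, we have $\bE Z_j^2=\int\!\!\int \Cov(\1_{\{X^n_s\in A_j\}},\1_{\{X^n_t\in A_j\}})\,(\nu_n\otimes\nu_n)(\od s,\od t)$. By hypothesis \eqref{item:thm:2:b:2}, the integrand is $\le 0$ for a.e.\ $(s,t)\notin D_n$, so the integral is bounded by the integral over $D_n$. On $D_n$, I bound $\sum_j\Cov(\cdot)\le\sum_j\bP(X^n_s\in A_j,X^n_t\in A_j)\le 1$ because the $A_j$ are disjoint. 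Hence
\begin{align*}
\sum_j\bE Z_j^2\le(\nu_n\otimes\nu_n)(D_n).
\end{align*}

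\textbf{Step 4 (conclusion in two cases).} If $p\le2$, Cauchy--Schwarz gives $\|\sum_j|Z_j|\|_p\le\|\sum_j|Z_j|\|_2\le N^{1/2}(\sum_j\bE Z_j^2)^{1/2}$. If $p>2$, I use $(\sum_j|Z_j|)^p\le N^{p-1}\sum_j|Z_j|^p$ and $|Z_j|^p\le C_\nu^{p-2}Z_j^2$. This yields $\|\sum_j|Z_j|\|_p\le C_\nu^{1-2/p}N^{1-1/p}((\nu_n\otimes\nu_n)(D_n))^{1/p}$. In both cases the main term is bounded by a constant times
\begin{align*}
N(K_n,\ep)^{1-(\frac1p\wedge\frac12)}\|K_n\|\,((\nu_n\otimes\nu_n)(D_n))^{\frac1p\wedge\frac12},
\end{align*}
which tends to $0$ by hypothesis \eqref{item:thm:2:b:1}. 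Therefore the $\limsup$ of the target $L^p$ norm is at most $2\ep C_\nu$, and letting $\ep\downarrow0$ proves \eqref{eq:thm:bounded-masses}. Convergence in probability then follows by Markov's inequality.

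I expect the main obstacle to be Step 3. One has to make sure that the sum over $j$ of the near-pair covariances is controlled by the mass of $D_n$ alone, without an extra factor of $N$; this is exactly what the disjointness of the partition $\{A_j\}$ provides. A secondary technical point is the measurability of $(s,t)\mapsto\Cov(\cdot)$, which I expect Fubini to handle.
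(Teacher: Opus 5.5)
Your proposal is correct and follows essentially the same route as the paper. Both arguments truncate at $K_n$, partition $K_n$ along an $\ep$-net, use the Fubini/covariance computation to get $\sum_j \bE Z_j^2 \le (\nu_n\otimes\nu_n)(D_n)$ from disjointness of the partition, and finish with a Cauchy--Schwarz/H\"older step that produces the factor $N(K_n,\ep)^{1-(\frac1p\wedge\frac12)}$. The differences are cosmetic: you apply the power-mean inequality to $\sum_j |Z_j|$ inside the expectation, where the paper uses Minkowski and then H\"older on $\sum_j (\bE Z_j^2)^{\frac1p\wedge\frac12}$, and you bound the covariance by the joint probability rather than by $\bP(X^n_s\in A_j)$. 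Both choices give the same estimate.
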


\begin{proof} According to \cref{lem:existence-integral}, the Bochner integral $\int_{S_n} \lt(X^n_s - \bE X^n_s\rt) \nu_n(\od s)$ exists a.s. for any $n \ge n_0$. 
	Let $\ep>0$ be arbitrary. By the $\CUI(p)$-condition, there exists $n_\ep \ge n_0$ such that
	\begin{align*}
		\bE \int_{S_n} \lt\|X^n_s\rt\|^p \1_{\{X^n_s \notin K_n\}} \nu_n(\od s)  \le \ep^p, \quad \forall n \ge n_\ep.
	\end{align*}
For each $n \ge n_\ep$, since $K_n$ is compact, according to the definition of $N(K_n, \ep)$ there exist  a family $\{x^{n,\ep}_i : 1 \le i \le N(K_n, \ep)\} \subseteq K_n$ and a Borel partition $\{A^{n, \ep}_i : 1 \le i \le N(K_n, \ep)\}$ of $K_n$  (see, e.g., \cite[Lemma 1]{CRUV20}) such that 
$$Y^{n,\ep}_s : = \sum_{i=1}^{N(K_n, \ep)} x^{n, \ep}_i \, \1_{\{X^n_s \in A_i^{n, \ep}\}}$$ satisfies
\begin{align*}
	\|X^n_s\1_{\{X^n_s \in K_n\}}  - Y^{n,\ep}_s \| \le \ep \quad \trm{on } \Omega \trm{ for all } s \in S_n.
\end{align*}
Then, for all $n \ge n_\ep$, the triangle inequality yields, a.s., 
\begin{align*}
&\lt\| \int_{S_n} \lt(X^n_s - \bE X^n_s\rt) \nu_n(\od s) \rt\|\\
& \le \lt\| \int_{S_n} \lt(X^n_s \1_{\{X^n_s \notin K_n\}} - \bE X^n_s \1_{\{X^n_s \notin K_n\}}\rt)  \nu_n(\od s) \rt\|\\
& \quad + \lt\| \int_{S_n} \lt((X^n_s \1_{\{X^n_s \in K_n\}} - Y^{n, \ep}_s) - \bE[X^n_s \1_{\{X^n_s \in K_n\}} - Y^{n, \ep}_s]\rt)  \nu_n(\od s) \rt\|\\
& \quad + \lt\| \int_{S_n} (Y^{n,\ep}_s - \bE Y^{n,\ep}_s) \, \nu_n(\od s) \rt\|\\
& \le \int_{S_n} \lt(\|X^n_s\| \1_{\{X^n_s \notin K_n\}} + \bE \|X^n_s\| \1_{\{X^n_s \notin K_n\}}\rt)\nu_n(\od s)\\
& \quad + \int_{S_n} \lt(\|X^n_s \1_{\{X^n_s \in K_n\}} - Y^{n, \ep}_s\| + \bE\|X^n_s \1_{\{X^n_s \in K_n\}} - Y^{n, \ep}_s\|\rt)  \nu_n(\od s) \\
& \quad + \BB\| \sum_{i=1}^{N(K_n, \ep)} x^{n, \ep}_i \int_{S_n} \lt(\1_{\{X^n_s \in A^{n, \ep}_i\}} - \bP(X^n_s \in A^{n, \ep}_i)\rt) \nu_n(\od s) \BB\|\\
& \le  \int_{S_n} \|X^n_s\| \1_{\{X^n_s \notin K_n\}} \nu_n(\od s) + \bE \int_{S_n} \|X^n_s\| \1_{\{X^n_s \notin K_n\}} \nu_n(\od s)  + 2 \nu_n(S_n) \ep \\
& \quad + \sum_{i=1}^{N(K_n, \ep)} \|x_i^{n,\ep}\| \lt| \int_{S_n} \lt(\1_{\{X^n_s \in A^{n, \ep}_i\}} - \bP(X^n_s \in A^{n,\ep}_i) \rt) \nu_n(\od s) \rt|.
\end{align*}
Since $p \in [1, \infty)$ and due to \eqref{eq:finite-spaces}, we apply the triangle inequality and H\"older's inequality with $\frac{1}{q} : = 1 - \frac{1}{p} \in [0, 1)$ to get
\begin{align*}
	& \lt(\bE\lt\| \int_{S_n} \lt(X^n_s - \bE X^n_s\rt)  \nu_n(\od s) \rt\|^p \rt)^{\frac{1}{p}}\\
	& \le 2 \nu_n(S_n)^{\frac{1}{q}} \lt(\bE\int_{S_n} \lt\|X^n_s\rt\|^p \1_{\{X^n_s \notin K_n\}} \nu_n(\od s)\rt)^{\frac{1}{p}} + 2 \nu_n(S_n) \ep\\
	& \quad + \sum_{i=1}^{N(K_n, \ep)} \|x^{n,\ep}_i\| \, \lt(\bE \lt| \int_{S_n} \lt(\1_{\{X^n_s \in A^{n, \ep}_i\}} - \bP(X^n_s \in A^{n, \ep}_i)\rt) \nu_n(\od s) \rt|^p\rt)^{\frac{1}{p}}.
\end{align*}
Note that, for a bounded random variable $\xi$, using Jensen's inequality yields
\begin{align*}
	(\bE|\xi|^p)^{\frac{1}{p}} & \le \begin{cases}
		(\bE|\xi|^2)^{\frac{1}{2}} & \trm{if } p \in [1, 2]\\[5pt]
		\|\xi\|_{L^\infty(\bP)}^{1 - \frac{2}{p}} (\bE |\xi|^2)^{\frac{1}{p}} & \trm{if } p \in (2, \infty) 
	\end{cases} \\
	& = \|\xi\|_{L^\infty(\bP)}^{(1 - \frac{2}{p})\vee 0} (\bE |\xi|^2)^{\frac{1}{p} \wedge \frac{1}{2}}.
\end{align*}
Applying this to $\xi := \int_{S_n} (\1_{\{X^n_s \in A^{n, \ep}_i\}} - \bP(X^n_s \in A^{n, \ep}_i))\, \nu_n(\od s)$ with  $\|\xi\|_{L^\infty(\bP)} \le \nu_n(S_n) \le C_\nu$, and noting that $\|x^{n, \ep}_i\| \le \|K_n\|$, we obtain
\begin{align}\label{thm:eq:bdd:mass:estimate}
	& \lt(\bE\lt\| \int_{S_n} \lt(X^n_s - \bE X^n_s\rt)  \nu_n(\od s) \rt\|^p \rt)^{\frac{1}{p}} \notag \\
	& \le 2 C_\nu^{\frac{1}{q}} \ep + 2 C_\nu \ep \notag \\
	 & \quad + C_\nu^{(1- \frac{2}{p})\vee 0} \|K_n\| \sum_{i=1}^{N(K_n, \ep)}  \bb(\bE \lt| \int_{S_n} \lt(\1_{\{X^n_s \in A^{n, \ep}_i\}} - \bP(X^n_s \in A^{n, \ep}_i)\rt) \nu_n(\od s) \rt|^2 \bb)^{\frac{1}{p} \wedge \frac{1}{2}}.
\end{align}
For $i = 1, \ldots, N(K_n, \ep)$, using Fubini's theorem (twice) yields
\begin{align*}
\trm{T}^{n, \ep}_i &:= \bE \lt| \int_{S_n} \lt(\1_{\{X^n_s \in A^{n, \ep}_i\}} - \bP(X^n_s \in A^{n, \ep}_i)\rt) \nu_n(\od s) \rt|^2  \\
	& = \bE \int_{S_n \times S_n} \lt(\1_{\{X^n_s \in A^{n, \ep}_i\}} - \bP(X^n_s \in A^{n, \ep}_i)\rt) \lt(\1_{\{X^n_t \in A^{n, \ep}_i\}} - \bP(X^n_t \in A^{n, \ep}_i)\rt) (\nu_n \otimes \nu_n)(\od s, \od t)  \\
	& = \int_{S_n \times S_n} \lt( \bP(X^n_s \in A^{n, \ep}_i, X^n_t \in A^{n, \ep}_i) - \bP(X^n_s \in A^{n, \ep}_i) \bP(X^n_t \in A^{n, \ep}_i)\rt) (\nu_n \otimes \nu_n)(\od s, \od t).
\end{align*}
For any $(s, t) \in S_n \times S_n$, noting that
\begin{align*}
R^{n, \ep}_i (s, t): =  \bP(X^n_s \in A^{n, \ep}_i, X^n_t \in A^{n, \ep}_i) - \bP(X^n_s \in A^{n, \ep}_i) \bP(X^n_t \in A^{n, \ep}_i)  \le \bP(X^n_s \in A^{n, \ep}_i),
\end{align*}
and exploiting the fact that $\{A^{n ,\ep}_i : 1 \le i \le N(K_n, \ep)\}$ is a partition of $K_n$, we get
\begin{align*}
\sum_{i = 1}^{N(K_n, \ep)} R^{n, \ep}_i (s, t)  \le \sum_{i = 1}^{N(K_n, \ep)} \bP(X^n_s \in A^{n, \ep}_i) = \bP(X^n_s \in K_n) \le 1.
\end{align*}
We use this estimate and apply condition \ref{item:thm:2:b:2} for $(\nu_n \otimes \nu_n)$-a.e.\ $(s, t) \in D_n^c$  to obtain
\begin{align*}
  \sum_{i = 1}^{N(K_n, \ep)}  \trm{T}^{n, \ep}_i & =   \int_{D_n}  \sum_{i = 1}^{N(K_n, \ep)}   R^{n, \ep}_i (s, t) (\nu_n \otimes \nu_n)(\od s, \od t) \\
	& \quad + \sum_{i = 1}^{N(K_n, \ep)}   \int_{D_n^c} R^{n, \ep}_i (s, t) (\nu_n \otimes \nu_n)(\od s, \od t)\\
	& \le (\nu_n \otimes \nu_n)(D_n).
\end{align*}
Set  $\alpha : = \frac{1}{p} \wedge \frac{1}{2} \in (0, \frac{1}{2}]$  and $\alpha': = 1 - \alpha$. Using H\"older's inequality with $\frac{1}{(1/\alpha)} + \frac{1}{(1/\alpha')} = 1$ yields
\begin{align*}
& \sum_{i=1}^{N(K_n, \ep)}  (\trm{T}^{n, \ep}_i)^{\alpha} \le N(K_n, \ep)^{\alpha'} \BB(  \sum_{i=1}^{N(K_n, \ep)}   \trm{T}^{n, \ep}_i \BB)^{\alpha} \le N(K_n, \ep)^{1- \alpha} ((\nu_n \otimes \nu_n)(D_n))^{\alpha}.
\end{align*}
Plugging those estimates into \eqref{thm:eq:bdd:mass:estimate} gives
\begin{align}\label{eq:thm2:Lp-estimate}
	&\lt(\bE\lt\| \int_{S_n} (X^n_s - \bE X^n_s) \, \nu_n(\od s) \rt\|^p \rt)^{\frac{1}{p}} \notag \\
	& \le 2 C_\nu^{\frac{1}{q}} \ep + 2 C_\nu \ep + C_\nu^{(1- \frac{2}{p})\vee 0} N(K_n, \ep)^{1 - (\frac{1}{p} \wedge \frac{1}{2})} \lt\|K_n\rt\| ((\nu_n \otimes \nu_n)(D_n))^{\frac{1}{p} \wedge \frac{1}{2}}.
\end{align}
Letting $n \to \infty$ and using condition \ref{item:thm:2:b:1} we obtain
\begin{align*}
	\limsup_{n \to \infty} \lt(\bE\lt\| \int_{S_n} (X^n_s - \bE X^n_s) \, \nu_n(\od s) \rt\|^p \rt)^{\frac{1}{p}} \le 2 C_\nu^{\frac{1}{q}} \ep + 2 C_\nu \ep.
\end{align*}
Since $\ep>0$ is arbitrary, \eqref{eq:thm:bounded-masses} follows.
\end{proof}

\begin{rema}\label{rema:cond-b} Let us comment on the conditions of \cref{thm:bounded-masses}.
\begin{enumerate}[(1)]
\item The set $D_n$ collects the near-pairs $(s, t)$ on which any dependence of $X^n_s, X^n_t$ is allowed. However,   the mass of $D_n$ under $\nu_n\otimes \nu_n$ is required to decay at a rate governed by the factor $N(K_n,\varepsilon)\|K_n\|$ measuring a kind of metric complexity of $K_n$. A natural candidate for $D_n$  is a measurable neighborhood of the diagonal $ \{(s,s) : s \in S_n\}$.   In contrast, for far-pairs $(s,t)\in D_n^c$, one requires the diagonal negative dependence of $X^n_s$, $X^n_t$. This condition is satisfied when $X^n_s$, $X^n_t$ are independent for all $(s, t) \in D_n^c$, see \cref{lem:DND-properties}(i).

\item \cref{exam:covering-necessary} shows that the covering factor $N(K_n, \ep)$ in condition \ref{item:thm:2:b:1} cannot be omitted.

\item \cref{exam:sharp-exponent} indicates that, when $p \in [1, 2]$, one cannot reduce the exponent $1 - (\frac{1}{p} \wedge \frac{1}{2}) = \frac{1}{2}$ in the covering factor $N(K_n, \ep)$. It seems to us that the exponent  when $p>2$ might not be optimal, but we are not aware of a way to improve it with the current approach.

\item  \cref{exam:negative-covariance} illustrates condition~\ref{item:thm:2:b:2} in the case of strictly negative covariances.
\end{enumerate}
\end{rema}

\begin{coro}\label{cor:bounded-mass}
Let the assumptions of \cref{cor:decay-mass}, excluding \eqref{eq:cor:decay-mass}, be satisfied. Let $\{M_n : n \ge 1\} \subseteq \bN\cup\{0\}$ be a non-decreasing sequence and assume that, for each $n \ge 1$, the  family $\{X^n_k :  k \in S_n\}$ is $M_n$-pairwise diagonally negatively dependent. If
\begin{align}\label{eq:cor:bounded-mass}
\lim_{n \to \infty} N(K_n, \ep)^{1 - (\frac{1}{p} \wedge \frac{1}{2})} \lt\|K_n\rt\| \BB((1+M_n) \sum_{k\in S_n} a_{n, k}^2\BB)^{\frac{1}{p} \wedge \frac{1}{2}} = 0\quad \trm{for all }  \ep >0,
\end{align}
then
	\begin{align*}
		\lim_{n \to \infty} \mathbb{E}  \,\BB\| \sum_{k \in S_n} a_{n, k} (X^n_k - \bE X^n_k) \BB\|^p  = 0.
	\end{align*}
\end{coro}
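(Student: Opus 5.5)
We reduce \cref{cor:bounded-mass} to \cref{thm:bounded-masses}, in the same way \cref{cor:decay-mass} was reduced to \cref{cor:decay-mass-CUI}. Split $a_{n,k} = a^+_{n,k} - a^-_{n,k}$ and set $\mu_n := \sum_{k\in S_n} a^{\pm}_{n,k}\delta_k$ (one sign at a time). Then $\mu_n(A) \le \nu_n(A)$ for all $A \in \cS_n$, so $\{X^n_k\}$ is $(\mu_n)$-$\CUI(p)$ w.r.t. $\{K_n\}$. Joint measurability follows from \cref{rema:property}\eqref{item:1}, and $\sup_n \mu_n(S_n) \le \sup_n \sum_k |a_{n,k}| < \infty$. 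If $\mu_n(S_n)$ vanishes for some $n$, we may replace $\mu_n$ by $\mu_n + \eta_n\delta_{k_n}$ for any fixed $k_n \in S_n$ and tiny $\eta_n$, or simply note that the sum is then zero. This ensures $C_\mu > 0$ in \eqref{eq:finite-spaces}; it is a cosmetic point. Once \eqref{eq:thm:bounded-masses} is established for $\mu_n = \nu_n^{\pm}$, we recover $\int_{S_n}(X^n_s - \bE X^n_s)\,\mu_n(\od s) = \sum_k a^{\pm}_{n,k}(X^n_k - \bE X^n_k)$, and the triangle inequality in $L^p$ finishes the proof.

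For the near-pair set we take the band $D_n := \{(k,l) \in S_n\times S_n : |k-l| \le M_n\}$, which lies in $\cP(S_n)\otimes\cP(S_n) = \cP(S_n\times S_n)$ since $S_n$ is countable. Condition~\ref{item:thm:2:b:2} of \cref{thm:bounded-masses} holds everywhere off $D_n$, by the assumed $M_n$-pairwise diagonal negative dependence. Note that the counting-measure null sets are trivial here: the statement ``for a.e.'' holds pointwise.

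The key step is verifying condition~\ref{item:thm:2:b:1}, which amounts to the bound
\begin{align*}
(\mu_n\otimes\mu_n)(D_n) = \sum_{|k-l|\le M_n} a^{\pm}_{n,k}a^{\pm}_{n,l} \le \sum_{|k-l|\le M_n} |a_{n,k}||a_{n,l}| \le (2M_n+1)\sum_{k\in S_n} a_{n,k}^2.
\end{align*}
We obtain the last inequality from $|a_{n,k}||a_{n,l}| \le \tfrac12(a_{n,k}^2 + a_{n,l}^2)$: for each $k$ there are at most $2M_n+1$ indices $l$ with $|k-l| \le M_n$, and symmetrizing gives the factor $2M_n+1$. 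Alternatively, for each shift $j$ with $|j| \le M_n$, Cauchy--Schwarz gives $\sum_k |a_{n,k}a_{n,k+j}| \le \sum_k a_{n,k}^2$. Since $2M_n + 1 \le 2(1+M_n)$, we get $((\mu_n\otimes\mu_n)(D_n))^{\alpha} \le 2^{\alpha}((1+M_n)\sum_k a_{n,k}^2)^{\alpha}$ with $\alpha = \frac1p\wedge\frac12$. Therefore \eqref{eq:cor:bounded-mass} yields condition~\ref{item:thm:2:b:1} for every $\ep > 0$.

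With both conditions in hand, \cref{thm:bounded-masses} applies to each of $\nu_n^{\pm}$, and the conclusion follows. The only mildly delicate point is this combinatorial bound on the band mass together with the bookkeeping for $\nu_n^{\pm}$. The monotonicity of $M_n$ is not actually needed in this argument.
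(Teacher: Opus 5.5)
Your proposal is correct and follows essentially the paper's own proof: the same split into $\nu_n^{\pm}$, the same band $D_n=\{(k,l):|k-l|\le M_n\}$, the same bound $(\mu_n\otimes\mu_n)(D_n)\le(1+2M_n)\sum_{k}a_{n,k}^2$, and the same application of \cref{thm:bounded-masses} followed by the triangle inequality. You are slightly more explicit than the paper, in two places: you state the constant $2^{\frac1p\wedge\frac12}$ coming from $1+2M_n\le 2(1+M_n)$, and you treat the degenerate case $\sup_n\mu_n(S_n)=0$, both of which the paper leaves implicit.
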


\begin{proof} We use the notations as in the proof of \cref{cor:decay-mass}. 
	Define 
	$$D_n : = \{(k, l) \in S_n \times S_n: |k - l| \le M_n\} \in \cS_n \otimes \cS_n.$$
	 For $\mu_n \in \{\nu_n^+, \nu_n^-\}$, by letting $a_{n, j} : =0$ if $j \notin S_n$ we get
	\begin{align*}
		(\mu_n \otimes \mu_n)(D_n) & \le \sum_{k, l \in S_n: \;|k - l| \le M_n} \lt|a_{n, k}\rt| \lt| a_{n, l}\rt| = \sum_{k \in S_n} a_{n, k}^2 + 2 \sum_{j = 1}^{M_n} \; \sum_{k \in S_n} \lt|a_{n, k}\rt| \lt| a_{n, k +j}\rt|\\
		& \le  \sum_{k \in S_n} a_{n, k}^2 + \sum_{j = 1}^{M_n} \; \sum_{k \in S_n} (a_{n, k}^2 + a_{n, k +j}^2) \le (1 + 2M_n) \sum_{k \in S_n} a_{n, k}^2.
	\end{align*}
It then follows from \eqref{eq:cor:bounded-mass} that $$\lim_{n \to \infty} N(K_n, \ep)^{1- (\frac{1}{p} \wedge \frac{1}{2})} \lt\|K_n\rt\| ((\mu_n \otimes \mu_n)(D_n))^{\frac{1}{p} \wedge \frac{1}{2}} = 0.$$
 The $M_n$-pairwise diagonal negative dependence of $\{X^n_k :  k \in S_n\}$ ensures condition \ref{item:thm:2:b:2} to hold for all $k, l \in S_n$ with $|k-l| >M_n$, i.e. $(k, l) \in (S_n \times S_n) \backslash D_n$. As $\{X^n_k : k \in S_n, n \ge 1\}$ is $(\mu_n)$-$\CUI(p)$ w.r.t. $\{K_n\}$, we apply \cref{thm:bounded-masses} to obtain
 \begin{align*}
 	\lim_{n \to \infty} \bE \, \BB\| \sum_{k \in S_n} a_{n, k}^{\pm} (X^n_k - \bE X^n_k) \BB\|^p = 0,
 \end{align*}
 which then verifies the desired conclusion.
\end{proof}

\begin{rema} In particular, when $p = 1$, $M_n = 0$ and the pairwise diagonal negative dependence is strengthened to the pairwise independence for all $n\ge 1$, 
\cref{cor:bounded-mass} still improves  \cite[Theorem 1]{CRUV20} in the sense that the exponent of the covering factor $N(K_n, \ep)$ here is reduced to $1/2$ instead of $1$ as in  \cite[Theorem 1]{CRUV20}.
\end{rema}

When $\{X^n_s : s\in S_n, n \ge 1\}$ satisfies the $(\nu_n)$-$\CUI(p)$ condition in the sense of  \cref{def:CUI-uniform}, then the factor $N(K_n, \ep)\|K_n\|$ in \cref{thm:bounded-masses}\ref{item:thm:2:b:1}  can be omitted.

\begin{prop}\label{thm:CUI-uniform}
Let $p \in [1, \infty)$ and assume that $\{X^n_s : s \in S_n, n \ge 1\}$ is $(\nu_n)$-$\CUI(p)$ in the sense of  \cref{def:CUI-uniform}. Assume that for each $n \ge 1$ there exists $D_n \in \cS_n \otimes \cS_n$ such that:
\begin{enumerate}[\rm (a')]
		\item \label{item:thm:CUI-uni:b:1} $\lim_{n \to \infty} (\nu_n \otimes \nu_n) (D_n) = 0$,
		
		\item \label{item:thm:CUI-uni:b:2} $X^n_s$ and $X^n_t$ are diagonally negatively dependent for $(\nu_n \otimes \nu_n)$-a.e.\ $(s, t) \in (S_n \times S_n) \backslash D_n$.
	\end{enumerate}
	Then, \cref{eq:thm:bounded-masses} holds.
\end{prop}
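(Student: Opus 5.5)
The plan is to reduce \cref{thm:CUI-uniform} to the quantitative estimate \eqref{eq:thm2:Lp-estimate} from the proof of \cref{thm:bounded-masses}, applied with a \emph{constant} compact set. Fix $\ep>0$ and take the compact set $K_\ep \in \cK(\cX)$ from \cref{def:CUI-uniform} such that
\begin{align*}
\sup_{n\ge1}\bE\int_{S_n}\|X^n_s\|^p\1_{\{X^n_s\notin K_\ep\}}\nu_n(\od s)\le \ep^p .
\end{align*}
We then repeat the proof of \cref{thm:bounded-masses} verbatim with $K_n$ replaced by $K_\ep$ for every $n$.

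First, the Bochner integrals are well defined for all $n$. Indeed, the bound $\bE\int_{S_n}\|X^n_s\|^p\nu_n(\od s)\le \|K_\ep\|^pC_\nu+\ep^p$ holds for every $n$, so the argument of \cref{lem:existence-integral} applies with $n_0=1$. Next, choose the $\ep$-net $\{x_i\}\subseteq K_\ep$ and the Borel partition $\{A_i\}$ of $K_\ep$, with $N(K_\ep,\ep)$ pieces, as in that proof; these no longer depend on $n$. The same chain of estimates (splitting off the part outside $K_\ep$, the $\ep$-approximation error, and the simple part $Y_s$) gives, for every $n\ge1$,
\begin{align*}
\Big(\bE\Big\|\int_{S_n}(X^n_s-\bE X^n_s)\nu_n(\od s)\Big\|^p\Big)^{\frac1p}\le 2C_\nu^{\frac1q}\ep+2C_\nu\ep+C_\nu^{(1-\frac2p)\vee0}N(K_\ep,\ep)^{1-\alpha}\|K_\ep\|\,((\nu_n\otimes\nu_n)(D_n))^{\alpha},
\end{align*}
where $\alpha=\frac1p\wedge\frac12$. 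The key covariance step is unchanged: condition (a') is not used there, and condition (b') makes $R_i(s,t)\le0$ off $D_n$, while $\sum_i R_i(s,t)\le1$ holds on $D_n$.

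Since $N(K_\ep,\ep)$ and $\|K_\ep\|$ are finite constants independent of $n$, condition (a') makes the last term tend to $0$ as $n\to\infty$. Hence the $\limsup$ of the left-hand side is at most $2C_\nu^{1/q}\ep+2C_\nu\ep$, and letting $\ep\downarrow0$ yields \eqref{eq:thm:bounded-masses}.

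The main point needing care is that the proof of \cref{thm:bounded-masses} really only uses the tail bound for the fixed $n$ under consideration, not a sequence of compact sets. We must check that the $n$-independent choice $K_\ep$ is legitimate throughout, in particular in the H\"older step for the tail term and in the Jensen step with $\|\xi\|_{L^\infty}\le C_\nu$. Both go through unchanged. An alternative route would be to apply \cref{thm:bounded-masses} directly with a diagonal choice $K_n:=K_{\ep_n}$, where $\ep_n\downarrow0$ slowly enough that condition (a) holds. The direct rerun of the estimate above avoids that bookkeeping and is the approach I would write out.
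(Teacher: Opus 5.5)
Your proposal is correct and follows the paper's own proof: fix $\ep>0$, take the $n$-independent compact set $K_\ep$ with tail bound $\ep^p$, rerun the proof of \cref{thm:bounded-masses} with $K_n$ replaced by $K_\ep$ to get the same estimate with the constant factor $N(K_\ep,\ep)^{1-\alpha}\|K_\ep\|$, then let $n\to\infty$ using (a') and afterwards $\ep\downarrow 0$. Your added remark that the Bochner integrals are well defined for every $n$ (so $n_0=1$) is a harmless refinement of the paper's argument.
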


\begin{proof}
For $\ep >0$, there exists $K_\ep \in \cK(\cX)$ such that $\sup_{n \ge 1} \bE \int_{S_n} \|X^n_s\|^p \, \1_{\{X^n_s \notin K_\ep\}} \nu_n(\od s) \le \ep^p$. We repeat the proof of \cref{thm:bounded-masses} with $K_n$ replaced by the fixed compact set $K_\ep$ so that \eqref{eq:thm2:Lp-estimate} becomes
\begin{align*}
	&\lt(\bE\lt\| \int_{S_n} (X^n_s - \bE X^n_s) \, \nu_n(\od s) \rt\|^p \rt)^{\frac{1}{p}}  \\
	& \le 2 C_\nu^{\frac{1}{q}} \ep + 2 C_\nu \ep + C_\nu^{(1- \frac{2}{p})\vee 0} N(K_\ep, \ep)^{1 - (\frac{1}{p} \wedge \frac{1}{2})} \lt\|K_\ep\rt\| ((\nu_n \otimes \nu_n)(D_n))^{\frac{1}{p} \wedge \frac{1}{2}}.
\end{align*}
Letting $n \to \infty$ and using \ref{item:thm:CUI-uni:b:1}, and then letting $\ep \downarrow 0$, we obtain the convergence \cref{eq:thm:bounded-masses}.
\end{proof}

\section{Discussion and examples}\label{sec:discuss-exam}

\subsection{Examples in the setting of discrete index measures}\label{sec:examples}

In the decaying index mass setting in \cref{cor:decay-mass-CUI} or under the uniform $(\nu_n)$-$\CUI(p)$ in \cref{thm:CUI-uniform}, the covering number $N(K_n, \ep)$ does not enter the mass condition \eqref{eq:thm:decay-mass-CUI} and condition \ref{item:thm:CUI-uni:b:1}, respectively. However, this is not the case for \cref{thm:bounded-masses} and the factor $N(K_n, \ep)$ appearing in condition \ref{item:thm:2:b:1} is necessary. We now show in \cref{exam:covering-necessary} that replacing condition \ref{item:thm:2:b:1} by the weaker condition
\begin{align}\label{eq:weakened-a}
	\lim_{n \to \infty} \lt\|K_n\rt\|  ((\nu_n \otimes \nu_n)(D_n))^{\frac{1}{p} \wedge \frac{1}{2}} = 0 \tag{w-a}
\end{align}
(i.e., omitting the factor $N(K_n, \ep)$) is \emph{not} sufficient to obtain \eqref{eq:thm:bounded-masses}, even under pairwise independence in place of pairwise diagonal negative dependence.

\begin{exam}\label{exam:covering-necessary}
Let $\cX = L^1([0,1], \Leb)=: L^1(\Leb)$, with $\Leb$ the Lebesgue measure, and let $p \in [1, \infty)$ arbitrarily. For each $n \ge 1$ we define
$$f_k^{(n)} := n\, \1_{[\frac{k-1}{n}, \frac{k}{n})}, \quad k = 1, \ldots, n,
$$
so that $\|f_k^{(n)}\|_{L^1(\Leb)} = 1$ and $f_i^{(n)} f_j^{(n)} =0$ for $i \neq j$. Let $(\xi_k)_{k \ge 1}$ be i.i.d. random variables with $\bP(\xi_k = \pm 1) = \frac{1}{2}$, and define
$$X^n_{k} := \xi_k f_k^{(n)}.$$
Let $\bR^n$ be the Banach space equipped with the norm $\|x\|_1: = \sum_{k=1}^n |x_k|$ for $x = (x_1, \ldots, x_n) \in \bR^n$. We define the linear map $T_n \colon (\bR^n, \|\cdot\|_1)  \to L^1([0,1], \Leb)$ by setting  
$$T_n(x): =  \sum_{k=1}^n x_k f_k^{(n)}, \quad x = (x_1, \ldots, x_n).$$
Since $f_i^{(n)} f_j^{(n)} =0$ for $i \neq j$, it holds that
\begin{align*}
\|T_n(x)\|_{L^1(\Leb)} = \BB\| \sum_{k=1}^n x_k f_k^{(n)} \BB\|_{L^1(\Leb)} = \sum_{k=1}^n |x_k| = \|x\|_1,
\end{align*}
which gives the isometry of $T_n$. Denoting by $\bar B^n_1$ the closed unit ball of $(\bR^n, \|\cdot\|_1)$ and defining
$$K_n := T_n(\bar B^n_1) = \lt\{\sum_{k=1}^n x_k f_k^{(n)} : \sum_{k=1}^n |x_k| \le 1\rt\},$$
we find that $K_n$ is a compact subset of $L^1(\Leb)$ due to the isometry of $T_n$ and the compactness of $\bar B^n_1$. Moreover, $\|K_n\|_{L^1(\Leb)} = \sup_{f \in K_n}\|f\|_{L^1(\Leb)} = 1$.

We now show that all conditions of \cref{thm:bounded-masses}, except \ref{item:thm:2:b:1}, are satisfied for the selection
$$S_n := \{1, \ldots, n\}, \quad \cS_n := \cP(S_n), \quad \nu_n := \frac{1}{n} \sum_{k=1}^n \delta_{k}, \quad D_n := \{(k, k) : k \in S_n\}.$$
 Moreover,  \eqref{eq:weakened-a} is also satisfied but the conclusion \eqref{eq:thm:bounded-masses} fails.
 \smallskip
 
(1) \emph{$\{X^n_k : k \in S_n, n \ge 1\}$ is $(\nu_n)$-$\CUI(p)$ w.r.t.\ $\{K_n\}$.} Note that $X^n_k \in \{\pm f_k^{(n)}\} \subset K_n$ due to $\xi_k = \pm 1$, one has $\1_{\{X^n_k \notin K_n\}} = 0$ for all $(n, k)$, and hence, \eqref{eq:CUI-condition} holds trivially.

\smallskip

(2) \emph{Condition \ref{item:thm:2:b:2} holds.} For any $n$ and any $k \ne l$, the independence of $\xi_k$ and $\xi_l$, together with \cref{lem:DND-properties}(i), yield condition \ref{item:thm:2:b:2}.
\smallskip

(3) \emph{Condition \eqref{eq:weakened-a} holds.} Since $(\nu_n \otimes \nu_n)(D_n) = 1/n$ and $\|K_n\|_{L^1(\Leb)} = 1$, we get 
	$$\|K_n\|_{L^1(\Leb)} ((\nu_n \otimes \nu_n)(D_n))^{\frac{1}{p} \wedge \frac{1}{2}} = 1/n^{\frac{1}{p} \wedge \frac{1}{2}} \to 0 \quad \trm{as } n \to \infty.$$

\smallskip

(4) \emph{Condition \ref{item:thm:2:b:1} fails.} Fix $\ep \in (0,1)$. As $K_n$ is isometric to $\bar B^n_1$, we have $N(K_n, \ep) = N(\bar B_1^n, \ep)$. By definition, one can cover $\bar B^n_1$ by $N(\bar B^n_1, \ep)$ open balls $B^n_\ep(x_i)$ with center $x_i \in \bar B^n_1$ and radius $\ep$. Let $\Leb_n$ denote the Lebesgue measure in $\bR^n$, it holds that
	$$\Leb_n(\bar B^n_1) \le \Leb_n \B(\cup_{i = 1}^{N(\bar B^n_1, \ep)} B^n_\ep(x_i) \B) \le N(\bar B^n_1, \ep) \Leb_n( \ep \bar B^n_1) = \ep^n N(\bar B^n_1, \ep)\Leb_n(\bar B^n_1),$$
	 which yields 
	 $$N(\bar B^n_1, \ep) \ge \ep^{-n}.$$
	 Therefore,  as $n \to \infty$,
$$N(K_n, \ep)^{1- (\frac{1}{p} \wedge \frac{1}{2})} \, \|K_n\|_{L^1(\Leb)} ((\nu_n \otimes \nu_n)(D_n))^{\frac{1}{p} \wedge \frac{1}{2}} \ge  \ep^{-n(1- (\frac{1}{p} \wedge \frac{1}{2}))}/ n^{\frac{1}{p} \wedge \frac{1}{2}} \to \infty.$$

\smallskip

(5)  \emph{Conclusion \eqref{eq:thm:bounded-masses} fails.} As $\bE X^n_k= 0$ and $f_i^{(n)} f_j^{(n)} = 0$ for $i \neq j$, one has
$$\lt\| \int_{S_n}(X^n_s - \bE X^n_s)\, \nu_n(\od s) \rt\|_{L^1(\Leb)} = \bb\| \frac{1}{n} \sum_{k=1}^n \xi_k f_k^{(n)} \bb\|_{L^1(\Leb)} = \frac{1}{n} \sum_{k=1}^n |\xi_k|= 1\quad \trm{a.s.},$$
	hence $\bE \,\b\| \int_{S_n}(X^n_s - \bE X^n_s)\, \nu_n(\od s) \b\|_{L^1(\Leb)}^p = 1 \not\to 0$.
\end{exam}

We next show that the exponent $1 - (\frac{1}{p} \wedge \frac{1}{2}) = \frac{1}{2}$ in \cref{thm:bounded-masses}\ref{item:thm:2:b:1} is sharp when $p \in [1, 2]$. Specifically, there exists $\{X^n_s : s \in S_n, n \ge 1\}$ of $(\nu_n)$-$\CUI(p)$ w.r.t. $\{K_n\}$ such that condition   \ref{item:thm:2:b:2} holds, condition \ref{item:thm:2:b:1} is satisfied with the factor $N(K_n, \ep)^\kappa$ for any $\kappa \in (0, \frac{1}{2})$ but fails for $\kappa = \frac{1}{2}$, and the conclusion also fails to hold.

\begin{exam}\label{exam:sharp-exponent}
Let $\cX = \ell^1$ be the Banach space of summable real sequences $x = (x_k)_{k \ge 1}$ with the norm $\|x\|_{\ell^1} = \sum_{k = 1}^\infty |x_k|$. Let $(e_k)_{k \ge 1}$ be the canonical basis of $\ell^1$. We let $p \in [1, 2]$ so that $1 - (\frac{1}{p} \wedge \frac{1}{2}) = \frac{1}{2}$. Take $0 < \kappa < \frac{1}{2}$  arbitrary. Set
\begin{align*}
S_n : = \{1, \ldots, n\}, \quad \cS_n : = \cP(S_n), \quad \nu_n : = \frac{1}{n} \sum_{k = 1}^n \delta_k, \quad D_n :  = \{(k, k) : k \in S_n\}.
\end{align*}
Let $\{\xi_k : k \ge 1\}$ be i.i.d. random variables with $\bP(\xi_k = \pm 1) = \frac{1}{2}$. Define 
$$X^n_k : = \xi_k e_k, \quad  k \in S_n, n \ge 1.$$
Obviously, condition \ref{item:thm:2:b:2} holds due to the independence. It is also clear that $\{X^n_k : k \in S_n, n\ge 1\}$ is $(\nu_n)$-$\CUI(p)$ w.r.t. $\{K^*_n\}$ where $K_n^* := \{\pm e_k : k \in S_n\}$ so that $X^n_k \in K_n^*$ for all $k \in S_n$, $n \ge 1$.  Let us fix $\ep \in (0, \frac{1}{2})$. Since $\|e_i - e_j\|_{\ell^1} = 2$ for $i \ne j$ and $\|e_i - (-e_i)\|_{\ell^1} = 2$, we get 
$$N(K^*_n, \ep) = 2n.$$
Assume $\{X^n_k : k \in S_n, n\ge 1\}$ is $(\nu_n)$-$\CUI(p)$ w.r.t. $\{K_n\}$ for some $\{K_n\} \subseteq \cK(\ell^1)$. Note that $\{X^n_k : k \in S_n, n \ge 1\}$ is also $(\nu_n)$-$\CUI(p)$ w.r.t. $\{K_n \cap K_n^*\}$. As we should choose those compact sets as small as possible (in the sense that the factor $N(K_n, \ep) \|K_n\|$ is small), we may assume that $K_n \subseteq K_n^*$ for all $n \ge 1$.
For any $\ep \in (0, \frac{1}{2})$, there exists $n_\ep$ such that for all $n \ge n_\ep$,
\begin{align*}
\ep \ge \bE \int_{S_n} \|X^n_s\|_{\ell^1}^p \1_{\{X^n_s \notin K_n\}} \nu_n(\od s) = \frac{1}{2n} \sum_{k=1}^n (\1_{\{e_k \notin K_n\}} + \1_{\{-e_k \notin K_n\}}).
\end{align*}
Hence, for any $n \ge n_\ep$,
\begin{align*}
\frac{1}{2n} \sum_{k=1}^n (\1_{\{e_k \in K_n\}} + \1_{\{-e_k \in K_n\}}) \ge 1 - \ep,
\end{align*}
which implies that
\begin{align*}
\# \{k \in [1, n] : e_k \in K_n\} \ge n(1 - 2\ep), \quad n \ge n_\ep.
\end{align*}
As a consequence, $\|K_n\|_{\ell^1} =  1$. Since $\ep \in (0, \frac{1}{2})$, one needs at least $\lceil 2n(1 -\ep) \rceil$ open balls with centers in $K_n$ and radius $\ep$ to cover $K_n$. Thus,
$$N(K_n, \ep) \ge  n (1 - 2\ep), \quad n \ge n_\ep.$$
Combining the arguments above we obtain
\begin{align*}
&N(K_n, \ep)^{\frac{1}{2}} \|K_n\|_{\ell^1} ((\nu_n \otimes \nu_n)(D_n))^{\frac{1}{2}}  \ge (n (1- 2 \ep))^{\frac{1}{2}} n^{-\frac{1}{2}} = \sqrt{1 - 2\ep}
\end{align*}
which does not converge to $0$ when $n \to \infty$, and thus, condition \ref{item:thm:2:b:1} fails to hold.   However,
\begin{align*}
&N(K_n, \ep)^{\kappa} \|K_n\|_{\ell^1} ((\nu_n \otimes \nu_n)(D_n))^{\frac{1}{2}}  \le N(K_n^*, \ep)^{\kappa} \|K_n^*\|_{\ell^1} n^{-\frac{1}{2}} \le 2^\kappa n^{\kappa  - \frac{1}{2}} \xrightarrow{n \to \infty} 0,
\end{align*}
which verifies condition \ref{item:thm:2:b:1} with the factor $N(K_n, \ep)^\kappa$ instead of $N(K_n, \ep)^\frac{1}{2}$.

Finally, since $\bE X^n_k = 0$ we get
$$\bE \lt\|\int_{S_n} (X^n_s - \bE X^n_s) \nu_n(\od s)\rt\|^p_{\ell^1} = \bE\lt\|\frac{1}{n} \sum_{k=1}^n \xi_k e_k \rt\|^p_{\ell^1} = 1 \not\to 0,$$
and hence, the conclusion fails to hold.
\end{exam}

In \cref{exam:negative-covariance} below, we illustrate \cref{thm:bounded-masses} with \emph{strictly} negative covariance condition.

\begin{exam}\label{exam:negative-covariance}
 Let $\cX = \ell^1$ and $(e_k)_{k \ge 1}$ the canonical basis of $\ell^1$. Fix $p \in [1, \infty)$. We assume for each $n \ge 1$ that:
 \begin{itemize}
 \item $j_n, J_n \in \bN$ with $j_n < J_n$ and $J_n \ge n$.
 
 \item A sequence  $\{c_{n, k} : k\ge 1\}$  of positive numbers that satisfies $c_{n, k} \to 0$ as $k \to \infty$. 
  
  \item  For $c^*_n : = \sup_{k \ge 1} c_{n, k}$, one has for all $\ep >0$ that
 $$\BB(1 + \sum_{k \,: \, c_{n, k} \ge \ep} c_{n, k}\BB)^{1- (\frac{1}{p} \wedge \frac{1}{2})} \frac{c^*_n}{n^{\frac{1}{p} \wedge \frac{1}{2}}} \xrightarrow{n \to \infty} 0.$$
 
 \item A sequence  $\{C_{n, k} : j_n + 1 \le k \le J_n\}$ of positive numbers  that satisfies $c^*_n < C_{n, k}$ for all $j_n + 1 \le k \le J_n$ and that
 $$\frac{1}{J_n} \sum_{k = j_n +1}^{J_n} C_{n, k}^p \xrightarrow{n \to \infty} 0.$$
 \end{itemize}
 Define $\{x_{n, k} : 1 \le k \le J_n, n \ge 1\} \subset \ell^1$ by setting
 \begin{align*}
 x_{n, k} : = \begin{cases}
 c_{n, k} \, e_k & \trm{if } 1 \le k \le j_n,\\
 C_{n, k} \, e_k & \trm{if } j_n < k \le J_n.
 \end{cases}
 \end{align*}
 For each $n \ge 1$, we let $(X^n_1, \ldots, X^n_n)$ be the first $n$ draws of a uniform sample \textit{without replacement} from the set $\{x_{n, 1}, \ldots, x_{n, J_n}\}$ of distinct elements. We now verify the conditions in \cref{thm:bounded-masses} for the family $\{X^n_k : k \in S_n, n \ge 1\}$ with the selection
 \begin{align*}
& S_n: = \{1, \ldots, n\}, \quad \cS_n : = \cP(S_n), \quad \nu_n : = \frac{1}{n} \sum_{k=1}^n \delta_k, \quad D_n : = \{(k, k) : k \in S_n\},\\
& K_n : = \cup_{k = 1}^\infty L_{n, k} \quad \trm{where } L_{n, k}: = \{ r e_k : 0 \le r \le c_{n, k}\}.
 \end{align*}
\smallskip

(1) \textit{$K_n$ is compact}: Let $(y^m)_{m \ge 1} \subseteq K_n$, $y^m = (y^m_k)_{k \ge 1} \in \ell^1$. If there exists a $k_0 \in \bN$ such that $L_{n, k_0}$ contains a subsequence of $(y^m)_{m \ge 1}$, then, by the compactness of $[0, c_{n, k_0}]$ in $\bR$, we can find a further subsequence that converges to a limit in $K_n$. Otherwise, the set $\{l(m) : m \ge 1\}$ is infinite where $l(m)$ is the smallest number such that $y^m \in L_{n, l(m)}$. We can find a subsequence $\{k_m : m \ge 1\} \subseteq \bN$ such that $l(k_m) \to \infty$. Since $\|y^{k_m}\|_{\ell^1} \le c_{n, l(k_m)} \to 0 $ as $l(k_m) \to \infty$ by assumption, it implies $y^{k_m} \to 0$ in $\ell^1$. Hence, in both cases we can find a convergent subsequence of $(y^m)_{m \ge 1}$ whose limit belongs to $K_n$. Hence, we obtain the compactness of $K_n$. Moreover, 
 $$\|K_n\|_{\ell^1} = \sup_{x \in K_n} \|x\|_{\ell_1} = \sup_{k \ge 1} c_{n, k} = c^*_n.$$
 
 \smallskip
 
 (2) \textit{$\{X^n_s : s \in S_n, n \ge 1\}$ is $(\nu_n)$-$\CUI(p)$ w.r.t. $\{K_n\}$}: For each $n\ge 1$, observe that $x_{n, k} \notin K_n$ if and only if $j_n +1 \le k \le J_n$. Since $X^n_1, \ldots, X^n_n$ are identically  distributed (but not pairwise independent) with $\bP(X^n_1 = x_{n, k}) = 1/J_n$ for $k = 1, \ldots, J_n$, it implies that
 \begin{align*}
\bE \int_{S_n} \|X^n_s\|_{\ell^1}^p \1_{\{X^n_s \notin K_n\}} \, \nu_n(\od s)  = \bE [\|X^n_1\|_{\ell^1}^p \1_{\{X^n_1 \notin K_n\}}] = \frac{1}{J_n} \sum_{k = j_n +1}^{J_n} C_{n, k}^p \xrightarrow{n \to \infty} 0
 \end{align*}
 by assumption. Hence, the $\CUI(p)$-condition holds.
 
 \smallskip
 
 (3) \textit{Condition \ref{item:thm:2:b:1}}: Let $\ep >0$ and fix $n \ge 1$. Observe that one can cover the set $\cup_{k \, : \, c_{n, k} < \ep} L_{n, k}$ by the open ball center at zero with radius $\ep$. Since $c_{n, k} \to 0$ as $k \to \infty$, there are finitely many $k$ such that $c_{n, k} \ge \ep$.  For those $k$ with $c_{n, k} \ge \ep$, one needs at most $\lceil c_{n, k}/\ep\rceil + 1$ open balls with centers in $K_n$ and radius $\ep$ to cover $L_{n, k}$. Hence,
 \begin{align*}
 N(K_n, \ep) \le 1 + \sum_{k \,: \, c_{n, k} \ge \ep} \lt( \lt\lceil \frac{c_{n, k}}{\ep} \rt\rceil + 1 \rt) \le  1 + 3 \sum_{k \,: \, c_{n, k} \ge \ep} \frac{c_{n, k}}{\ep} \le \lt(1\vee \frac{3}{\ep}\rt) \lt( 1 + \sum_{k \,: \, c_{n, k} \ge \ep} c_{n, k}\rt),
 \end{align*}
 where $\sum_{k \in \varnothing}: = 0$. Since $ \|K_n\|_{\ell^1}  = c^*_n$ and $(\nu_n \otimes \nu_n)(D_n) = 1/n$,  we arrive at
 \begin{align*}
 &N(K_n, \ep)^{1- (\frac{1}{p} \wedge \frac{1}{2})} \|K_n\|_{\ell^1} ((\nu_n \otimes \nu_n)(D_n))^{\frac{1}{p} \wedge \frac{1}{2}}\\
 & \le  \lt(1\vee \frac{3}{\ep}\rt)^{1- (\frac{1}{p} \wedge \frac{1}{2})}  \BB(1 +  \sum_{k \,: \, c_{n, k} \ge \ep} c_{n, k}\BB)^{1- (\frac{1}{p} \wedge \frac{1}{2})} \frac{c^*_n}{n^{\frac{1}{p} \wedge \frac{1}{2}}} \xrightarrow{n \to \infty} 0
 \end{align*}
 by assumption.
 
\smallskip

(4) \textit{Condition \ref{item:thm:2:b:2}}: Fix $n \ge 1$ and let $(i, j) \notin D_n$, i.e. $i \neq j$. By the same arguments as in \cref{exam:sampling-without-replacement} (with $N = J_n$), we find that $X^n_i$ and $X^n_j$ are diagonally negatively dependent and the covariance $\Cov(\1_{\{X^n_i \in A\}}, \1_{\{X^n_j \in A\}})$ is strictly negative for those Borel $A$ with $\#\{1 \le k \le J_n  : x_{n, k} \in A\} \in [1, J_n - 1]$.

 \end{exam}

\begin{rema}\label{rema:enlarging-Kn}
The size of $K_n$ can be increasing in $n$ in the sense that $c^*_n = \|K_n\|_{\ell^1} \to \infty$.
For instance, fix $p \in [1, \infty)$, set $\alpha : = \frac{1}{p} \wedge \frac{1}{2} >0$ and choose
\begin{align*}
c_{n, k} = n^{\alpha/4}/k, \qquad j_n = n^3, \qquad J_n = n^3 + n, \qquad
C_{n, k} = (n+1)^\alpha.
\end{align*}
Then, $c^*_n = n^{\alpha/4}$ and $C_{n, k} > c^*_n$ clearly. Since $\alpha p\le 1$, it holds that
$$\frac{1}{J_n} \sum_{k = j_n + 1}^{J_n} C_{n, k}^p = \frac{n(n+1)^{\alpha p}}{n^3 + n} \xrightarrow{n \to \infty} 0.$$
For any $\ep >0$, one has
$$\BB(1 + \sum_{k \,: \, c_{n, k} \ge \ep} c_{n, k}\BB)^{1- (\frac{1}{p} \wedge \frac{1}{2})} \frac{c^*_n}{n^{\frac{1}{p} \wedge \frac{1}{2}}} = \BB(1 + n^{\alpha/4} \sum_{1 \le k \le n^{\alpha/4} /\ep} \frac{1}{k}\BB)^{1- (\frac{1}{p} \wedge \frac{1}{2})} \frac{n^{\alpha/4}}{n^\alpha} \xrightarrow{n \to \infty} 0$$
where we use the fact that $\sum_{1 \le k \le N} \frac{1}{k} \le 1 + \log N$. Hence, all conditions in \cref{exam:negative-covariance} hold.
\end{rema}

\subsection{A functional law of large numbers in $L^p$}\label{sec:functional-limit-thm}
In this part we illustrate the index measure space framework where the index measures $\nu_n$ are not necessarily discrete and can be  absolutely continuous w.r.t. the Lebesgue measure. The underlying separable Banach space we consider here is the path space of diffusion processes.

Let $\cX = C([0, 1])=: \cC$ be the Banach space of continuous functions $x\colon [0, 1] \to \bR$ equipped with the sup norm $\|x\|_\infty : = \sup_{u \in [0, 1]} |x(u)|$. Note that $(\cC, \|\cdot\|_\infty)$ is separable. Let $\cB(\cC)$ denote the Borel $\sigma$-algebra induced by $\|\cdot\|_\infty$.

Let $\beta  = (\beta(t))_{t \in [0, 1]}$ be a standard Brownian motion, i.e. $\beta(0) = 0$,  $\beta$ has independent and stationary increments, $\beta(t)$ is Gaussian with zero mean and variance $t$, and the path $t \mapsto \beta(\omega, t)$ is continuous on $[0, 1]$ for all $\omega \in \Omega$. Then, $\beta$ can be seen as an $\cF/\cB(\cC)$-measurable map $\beta \colon \Omega \to \cC$ with $\beta(\omega)(\cdot) : = \beta(\omega, \cdot)$ (e.g., by \cite[Remark 2.4.22]{KS91} and the continuity of the restriction map $[0, \infty) \ni t \mapsto t \in [0, 1]$). We refer the reader to \cite{KS91} for relevant properties of Brownian motion and the notion of strong solution of  stochastic differential equations (SDEs) driven by Brownian motion. The following result is well-known.

\begin{lemm}\label{lem:SDE-wellposed}
Let $b\colon [0, 1] \times \bR \to \bR$ and $\sigma \colon [0, 1] \times \bR \to \bR$ be $\cB([0, 1])\otimes \cB(\bR)/\cB(\bR)$-measurable and assume there is a constant $L$ depending only on $(b, \sigma)$ such that, for all $x, y \in \bR$ and $t \in [0, 1]$,
\begin{align*}
& |b(t, x) - b(t, y)| + |\sigma(t, x) - \sigma(t, y)| \le L |x - y|,\\
& |b(t, x)| + |\sigma(t, x)| \le L(1 + |x|).
\end{align*} 
Then, the following SDE with given $x_0 \in \bR$,
\begin{align*}
Y(t) = x_0 + \int_0^t b(u, Y(u)) \od u + \int_0^t \sigma(u, Y(u)) \od \beta(u), \quad t \in [0, 1],
\end{align*}
has a strong solution $Y = (Y(t))_{t \in [0, 1]}$, which is unique up to a  $\bP$-null set, and satisfies
\begin{align*}
\bE\lt[\sup_{0 \le t \le 1} |Y(t)|^q\rt] < \infty, \quad \forall q \in [1, \infty).
\end{align*}
Moreover, there exists a $\cB(\cC)/\cB(\cC)$-measurable $\Phi\colon \cC \to \cC$ depending only on $(x_0, b, \sigma)$ such that
\begin{align*}
Y(\omega, \cdot) = \Phi(\beta(\omega, \cdot)) \quad \trm{for }\bP\trm{-a.s. } \omega \in \Omega.
\end{align*}
\end{lemm}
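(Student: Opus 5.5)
The plan is to treat this as the classical strong well-posedness theorem for Lipschitz SDEs, combined with the Yamada--Watanabe representation of strong solutions. I would not re-derive these from scratch but organise the argument so that each piece reduces to a standard estimate.

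\emph{Existence and uniqueness.} Let $(\cF_t)$ be the augmented filtration of $\beta$. Define Picard iterates $Y_0(t):=x_0$ and
\begin{align*}
Y_{m+1}(t) := x_0 + \int_0^t b(u, Y_m(u))\, \od u + \int_0^t \sigma(u, Y_m(u))\, \od \beta(u).
\end{align*}
These are continuous and $(\cF_t)$-adapted. For $q \ge 2$ I would bound $\bE \sup_{s \le t} |Y_{m+1}(s) - Y_m(s)|^q$ by $C \int_0^t \bE \sup_{r \le u} |Y_m(r) - Y_{m-1}(r)|^q \, \od u$. This uses H\"older's inequality for the drift, the Burkholder--Davis--Gundy inequality for the stochastic integral, and the Lipschitz bound. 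Iterating gives a bound of order $C^m t^m / m!$. Hence $Y_m$ converges a.s. uniformly on $[0,1]$ (Borel--Cantelli) and in $L^q(\bP; \cC)$ to a continuous adapted process $Y$ solving the equation. Uniqueness up to a null set follows from the same estimate applied to two solutions, localised by stopping times $\tau_R := \inf\{t : |Y(t)| \vee |Y'(t)| \ge R\}$, together with Gronwall's lemma.

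\emph{Moment bound.} For the bound $\bE[\sup_{t \le 1} |Y(t)|^q] < \infty$ I would use the same BDG--H\"older estimate with the linear growth condition applied to $Y$ stopped at $\tau_R$. This gives $\bE \sup_{s \le t \wedge \tau_R} |Y(s)|^q \le C(1 + |x_0|^q) + C \int_0^t \bE \sup_{r \le u \wedge \tau_R} |Y(r)|^q \, \od u$. Gronwall's lemma then yields a bound uniform in $R$, and letting $R \to \infty$ with Fatou's lemma finishes this part; the case $q \in [1,2)$ follows from $q = 2$ by Jensen's inequality. All of this is as in \cite[Section 5.2]{KS91}.

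\emph{The functional $\Phi$.} This is the step I expect to be the main obstacle. Pathwise uniqueness plus weak existence gives, by the Yamada--Watanabe theorem \cite[Corollary 5.3.23]{KS91}, a map $\tilde\Phi \colon \cC \to \cC$ with $Y = \tilde\Phi(\beta)$ a.s. This $\tilde\Phi$ is measurable with respect to the completion of $\cB(\cC)$ under the Wiener measure $\mathbb W$, and it depends only on $(x_0, b, \sigma)$ because it is built from the law of $(\beta, Y)$, which is unique. To upgrade to a Borel map, I would use the standard fact that a $\overline{\cB(\cC)}^{\mathbb W}$-measurable map into a Polish space coincides $\mathbb W$-a.e. with a Borel map $\Phi$. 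Concretely, approximate $\tilde\Phi$ by countably-valued measurable maps and replace each preimage by a Borel subset differing by a $\mathbb W$-null set. Then $\{\Phi(\beta) \ne \tilde\Phi(\beta)\}$ is contained in a $\bP$-null set, since $\beta$ has law $\mathbb W$, and the completeness of $\bP$ gives $Y = \Phi(\beta)$ a.s.

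As a fallback avoiding Yamada--Watanabe, I would make the Picard scheme itself Borel in the path. I would replace each stochastic integral by a dyadic Riemann-type sum, using time-averaged coefficients on each dyadic interval so that only measurability of $b, \sigma$ in $t$ is needed. I would then show the resulting Borel maps $\Phi_k \colon \cC \to \cC$ satisfy $\Phi_k(\beta) \to Y$ a.s. along a subsequence, and set $\Phi(x) := \lim_k \Phi_k(x)$ on the Borel set where this limit exists in $\cC$, and $\Phi(x) := 0$ elsewhere. The delicate point there is controlling the discretisation error when $b, \sigma$ are only measurable in $t$.
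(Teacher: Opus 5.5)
Your proposal is correct and follows the paper's route. The paper cites \cite[Theorem~5.2.9, Problem~5.3.15]{KS91} for well-posedness and the moment bound, and the proofs there are the standard Picard/BDG/Gronwall arguments you sketch; it then uses a Yamada--Watanabe-type functional representation for $\Phi$. The one difference is in how $\Phi$ is made Borel: the paper takes it directly from Kallenberg's universal functional solution theorem \cite[Theorem~1]{Ka96}, transferred to $[0,1]$ via restriction, whereas you take the completion-measurable map from \cite{KS91} and modify it on a Wiener-null Borel set. Your route is equally valid, because $\beta$ has law $\mathbb{W}$, $\bP$ is complete, and uniqueness of the joint law of $(\beta,Y)$ makes the same $\Phi$ work for every driving Brownian motion.
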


\begin{proof}
The existence and uniqueness of the strong solution follows from
\cite[Theorem~5.2.9]{KS91}, and the moment estimate from
\cite[Problem~5.3.15]{KS91}. The functional representation $Y = \Phi(\beta)$ with $\Phi\colon \cC \to \cC$ Borel measurable follows from
\cite[Theorem~1]{Ka96} with noting that the
restriction map from $[0, \infty)$ to $[0, 1]$ is continuous.
\end{proof}

Recall that a \textit{Brownian sheet} $W = (W(s, t))_{(s, t) \in [0, \infty) \times [0, 1]}$ is a centered Gaussian process with covariance 
\begin{align*}
\bE[W(s, t) W(s', t')] = (s \wedge s') (t \wedge t'),
\end{align*}
which admits a version with $(s, t) \mapsto W(s, t)$ continuous on $[0, \infty) \times [0, 1]$ almost surely (see \cite[Chapter 1]{Wa86} for its construction and properties). By setting to be zero on a $\bP$-null set, we may assume that all paths of $W$ are continuous, and we always work with such a continuous version in this part.

\begin{prop}\label{prop:continuos-nu}
Let $W = (W(s, t))_{(s, t) \in [0, \infty) \times [0, 1]}$ be a Brownian sheet. Assume that:
\begin{itemize}
\item $\{r_n : n \ge 1\}$ is a sequence of positive numbers with $r_n \to 0$ as $n \to \infty$.
\item $S_n : = [0, \infty)$, $\nu_n$ is a finite measure on $\cS_n: = \cB([0, \infty))$ with $\sup_{n \ge 1} \nu_n(S_n) = C_\nu \in (0, \infty)$ and satisfies
\begin{align*}
(\nu_n\otimes \nu_n)(D_n) \xrightarrow{n \to \infty} 0, \quad \trm{where } D_n : = \{(s, s') \in [0, \infty) \times [0, \infty) : |s - s'| \le r_n\}.
\end{align*}
\item The coefficients $b, \sigma$ satisfy the conditions in \cref{lem:SDE-wellposed}.
\item $x_0 \in \bR$ is fixed. 
\end{itemize}
For each $n \ge 1$ and each $s \in [0, \infty)$, we define
\begin{align*}
B^n_s(t) : = r^{-1/2}_n (W(s+ r_n, t) - W(s, t)), \quad t \in [0, 1].
\end{align*}
Then, $B^n_s = (B^n_s(t))_{t \in [0, 1]}$ is a standard Brownian motion. Let $X^n_s = (X^n_s(t))_{t \in [0, 1]}$ be a strong solution to the SDE
\begin{align}\label{eq:prop:SDE}
X^n_s(t) = x_0 + \int_0^t b(u, X^n_s(u))\od u + \int_0^t \sigma(u, X^n_s(u)) \od B^n_s(u), \quad t \in [0, 1].
\end{align}
Then, there is a process $\widetilde X^n_s = (\widetilde X^n_s(t))_{t \in [0, 1]}$ such that:
\begin{itemize}
\item $\widetilde X^n_s(t) = X^n_s(t)$ for all $t \in [0, 1]$ a.s.,
\item $(\omega, s) \mapsto \widetilde X^n_s(\omega)$ is $\cF\otimes \cB([0, \infty))/\cB(\cC)$-measurable,
\end{itemize}
and for any $p \in [1, \infty)$ it holds
\begin{align}\label{prop:eq:Lp-limit-continuous-nu}
\lim_{n \to \infty} \bE \lt\| \int_0^\infty \lt(\widetilde X^n_s - \bE \widetilde X^n_s \rt)\nu_n(\od s) \rt\|^p_\infty = 0.
\end{align}
\end{prop}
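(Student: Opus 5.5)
The plan is to reduce the statement to \cref{thm:CUI-uniform}. There are four things to establish: that $B^n_s$ is a Brownian motion, a jointly measurable version $\widetilde X^n_s$, uniform $(\nu_n)$-$\CUI(p)$, and diagonal negative dependence off $D_n$.

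\emph{Brownian motion and independence.} $B^n_s$ is a centered continuous Gaussian process. Its covariance is
\begin{align*}
r_n^{-1}\,\bE\big[(W(s+r_n,t)-W(s,t))(W(s+r_n,t')-W(s,t'))\big] = t\wedge t',
\end{align*}
computed from $\bE[W(s,t)W(s',t')]=(s\wedge s')(t\wedge t')$. Hence $B^n_s$ is a standard Brownian motion. For $|s-s'|>r_n$ the intervals $(s,s+r_n]$ and $(s',s'+r_n]$ are disjoint, and the same covariance computation gives cross-covariance $0$. So $B^n_s$ and $B^n_{s'}$ are jointly Gaussian and uncorrelated, hence independent as $\cC$-valued random elements.

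\emph{Joint measurability.} By \cref{lem:SDE-wellposed}, $X^n_s=\Phi(B^n_s)$ a.s., with a Borel map $\Phi\colon\cC\to\cC$ that depends only on $(x_0,b,\sigma)$ and not on $s$ or $n$. I would set $\widetilde X^n_s:=\Phi(B^n_s)$. The map $(\omega,s)\mapsto B^n_s(\omega)\in\cC$ is continuous in $s$ for each $\omega$, by continuity of the paths of $W$ and uniform continuity on compacts. It is measurable in $\omega$ for each $s$. Such a Carathéodory map into the separable metric space $\cC$ is $\cF\otimes\cB([0,\infty))$-measurable, and composing with the Borel map $\Phi$ preserves this. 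By \cref{lem:DND-properties}(iii) and (i), $\widetilde X^n_s=\Phi(B^n_s)$ and $\widetilde X^n_{s'}$ are independent, hence diagonally negatively dependent, for all $(s,s')\notin D_n$. This is condition (b') of \cref{thm:CUI-uniform}, and condition (a') is assumed.

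\emph{Uniform CUI.} The key observation is that the law $\mu:=\bP\circ\Phi(\beta)^{-1}$ on $\cC$ is the same for all $n,s$, namely the law of the solution driven by a standard Brownian motion. I would verify the two conditions of \cref{prop:charac-CUI}(ii).
\begin{itemize}
\item Tightness: $\mu$ is a single Borel probability measure on the Polish space $\cC$, so it is tight. For $\ep>0$ choose $K$ compact with $\mu(K^c)\le\ep/C_\nu$. Then $(\bP\otimes\nu_n)(\widetilde X^n_s\notin K)\le\ep$ for all $n$.
\item Uniform integrability: for $a>0$,
\begin{align*}
\bE\int\|\widetilde X^n_s\|_\infty^p\1_{\{\|\widetilde X^n_s\|_\infty>a\}}\,\nu_n(\od s)\le C_\nu\,a^{-p}\,\bE\Big[\sup_{t\le 1}|Y(t)|^{2p}\Big],
\end{align*}
where $Y$ is the solution driven by a standard Brownian motion. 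The right-hand side is finite by \cref{lem:SDE-wellposed} and tends to $0$ as $a\to\infty$, uniformly in $n$.
\end{itemize}
Thus the family is $(\nu_n)$-$\CUI(p)$, and \cref{thm:CUI-uniform} gives \eqref{prop:eq:Lp-limit-continuous-nu}.

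The step I expect to be the main obstacle is joint measurability. One must make sure that $\Phi$ can be taken independent of $s$, which holds because it depends only on $(x_0,b,\sigma)$. One must also make sure that evaluating $\Phi$ along the jointly measurable map $(\omega,s)\mapsto B^n_s(\omega)$ is legitimate, which it is because $\Phi$ is a fixed Borel map and no null sets depending on $s$ enter the definition of $\widetilde X^n_s$. The equality $\widetilde X^n_s=X^n_s$ a.s. then holds for each fixed $s$, since pathwise uniqueness makes it independent of the chosen strong solution.
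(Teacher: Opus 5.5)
Your proposal is correct and follows essentially the same route as the paper. You set $\widetilde X^n_s:=\Phi(B^n_s)$ with the $s$-independent Borel map from \cref{lem:SDE-wellposed}, obtain uniform $(\nu_n)$-$\CUI(p)$ via \cref{prop:charac-CUI} from the common law $\mu$, get condition (b') from independence of $B^n_s,B^n_{s'}$ off $D_n$ together with \cref{lem:DND-properties}, and conclude with \cref{thm:CUI-uniform}. The remaining differences are minor and all valid: a Carath\'eodory argument (continuity in $s$) instead of joint measurability in $(\omega,s,t)$, vanishing cross-covariance instead of independent increments of the sheet, and the bound $\|x\|^p\1_{\{\|x\|>a\}}\le a^{-p}\|x\|^{2p}$ instead of Cauchy--Schwarz.
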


\begin{proof} 

\textbf{Step 1.} We show that  $B^n_s = (B^n_s(t))_{t \in [0, 1]}$ is a standard Brownian motion  for every $n, s$. Indeed, $B^n_s$ is centered Gaussian due to the Gaussianity of $W$. From the covariance structure of $W$, we get, for any $t, t' \in [0, 1]$,
\begin{align*}
\bE[B^n_s(t) B^n_s(t')] & = r_n^{-1} \bE\lt[(W(s+ r_n, t) - W(s, t)) (W(s+ r_n, t') - W(s, t'))\rt]\\
& = r^{-1}_n r_n (t \wedge t') = t\wedge t'.
\end{align*}
Since paths of $B^n_s$ are continuous on $[0, 1]$, which is inherited from that of $W$, we infer that $B^n_s$ is a standard Brownian motion.

By the regularity of the coefficients $b, \sigma$, for each $n \ge 1$ and $s \in [0, \infty)$ there exists a unique strong solution $X^n_s$ to the SDE \eqref{eq:prop:SDE}.  Let $\Phi \colon \cC \to \cC$ be the $\cB(\cC)/\cB(\cC)$-measurable function given in \cref{lem:SDE-wellposed}. Recall that $\Phi$ only depends on $(x_0, b, \sigma)$. Then, 
\begin{align*}
X^n_s(\omega, \cdot) = \Phi(B^n_s(\omega, \cdot)) \quad \trm{for } \bP\trm{-a.s. } \omega \in \Omega.
\end{align*}
Note that the $\bP$-null set here does depend on $s$. We define
\begin{align*}
\widetilde X^n_s(\omega, \cdot ) := \Phi(B^n_s(\omega, \cdot)) \quad \trm{for all } \omega \in \Omega.
\end{align*}
Then, $\widetilde X^n_s$ is a version of $X^n_s$ in the sense that
\begin{align*}
\widetilde X^n_s(\omega, \cdot) = X^n_s(\omega, \cdot) \quad \trm{for } \bP\trm{-a.s. } \omega \in \Omega.
\end{align*}

\noindent \textbf{Step 2.} We verify the conditions of \cref{thm:CUI-uniform} for $\{\widetilde X^n_s : s \in S_n, n \ge 1\}$ with $D_n$ as above.

\smallskip

(1) \textit{Joint measurability}: Fix $n \ge 1$. Since $(\omega, s, t) \mapsto W(\omega, s, t)$ is $\cF \otimes \cB([0, \infty)) \otimes \cB([0, 1])/\cB(\bR)$-measurable and the shift $(s, t) \mapsto (s + r_n, t)$ is continuous, we infer that $(\omega, s, t) \mapsto B^n_s(\omega, t)$ is measurable. In addition, since $t \mapsto B^n_s(\omega, t)$ is continuous for all $(\omega, s)$, it follows from \cite[p. 84]{Bi99} that $(\omega, s) \mapsto B^n_s(\omega, \cdot)$ is $\cF\otimes \cB([0, \infty))/\cB(\cC)$-measurable. Composing with the $\cB(\cC)/\cB(\cC)$-measurable $\Phi$, the map $(\omega, s) \mapsto \widetilde X^n_s(\omega, \cdot) = \Phi(B^n_s(\omega, \cdot))$ is $\cF\otimes \cB([0, \infty))/\cB(\cC)$-measurable.

\smallskip

(2) \textit{$\{\widetilde X^n_s : s \in [0, \infty), n \ge 1\}$ is $(\nu_n)$-$\CUI(p)$ in the sense of \cref{def:CUI-uniform} for any $p \in [1, \infty)$}: Let us fix $p \in [1, \infty)$. Since each $B^n_s$ is a standard Brownian motion and $\Phi$ only depends on $(x_0, b, \sigma)$, we infer that $\widetilde X^n_s = \Phi(B^n_s)$ has the same law on $\cC$ for every $n \ge 1, s \in [0, \infty)$. In other words, the family $\{\widetilde X^n_s : n \ge 1, s \in [0, \infty)\}$ is identically distributed with common distribution denoted by $\mu$. Moreover, a change of variables and \cref{lem:SDE-wellposed} give
\begin{align}\label{eq:moment-bdd-Lq}
\int_{\cC} \|x\|_\infty^q \,\mu(\od x) = \bE \|\widetilde X^n_s\|_\infty^q = \bE \lt[\sup_{0 \le t \le 1} |\widetilde X^n_s(t)|^q \rt] < \infty, \quad \forall q \in [1, \infty).
\end{align}
Since $\mu$ is a probability measure on $\cB(\cC)$ and $(\cC, \|\cdot\|_\infty)$ is complete and separable, it follows from  \cite[Theorem 1.3]{Bi99} that $\mu$ is tight. Hence, for any $\ep>0$, there exists a compact subset $K_\ep$ of $\cC$ such that
\begin{align*}
\bP(\widetilde X^n_s \notin K_\ep) = \mu(K_\ep^c) \le \ep/C_\nu, \quad \forall n \ge 1, s \in [0, \infty).
\end{align*}
Applying Fubini's theorem we get
\begin{align*}
\sup_{n \ge 1} (\bP \otimes \nu_n)(\{(\omega, s) \in \Omega \times S_n : \widetilde X^n_s(\omega) \notin K_\ep\}) & = \sup_{n \ge 1} \int_{S_n} \bP(\widetilde X^n_s \notin K_\ep) \nu_n(\od s) \le \ep,
\end{align*}
which implies the uniform tightness of $\{\widetilde X^n_s : n \ge 1, s \in [0, \infty)\}$. We   apply H\"older's inequality and Fubini's theorem, together with \eqref{eq:moment-bdd-Lq}, to get
\begin{align*}
& \limsup_{a \to \infty} \; \sup_{n \ge 1} \bE \int_{S_n} \|\widetilde X^n_s\|^p_\infty \, \1_{\{\|\widetilde X^n_s\|_\infty > a\}} \nu_n(\od s) \\
& \le  \limsup_{a \to \infty} \; \sup_{n \ge 1} \sqrt{\bE \int_{S_n} \| \widetilde X^n_s \|^{2p}_\infty \, \nu_n(\od s)} \sqrt{\bE \int_{S_n} \1_{\{\|\widetilde X^n_s\|_\infty > a\}} \nu_n(\od s)}\\
& =  \limsup_{a \to \infty} \; \sup_{n \ge 1} \sqrt{\nu_n(S_n) \int_{\cC} \|x\|_\infty^{2p} \, \mu(\od x)}  \sqrt{ \nu_n(S_n) \mu(\{\|x\|_\infty >a\})}\\
& = 0,
\end{align*}
where we use the fact that $\mu$ is a probability measure on $\cB(\cC)$. Hence, it follows from \cref{prop:charac-CUI} that $\{\widetilde X^n_s : n \ge 1, s \in [0, \infty)\}$ is $(\nu_n)$-$\CUI(p)$.
 
 \smallskip

(3) \textit{Condition \ref{item:thm:CUI-uni:b:1}}: This is exactly the assumption $(\nu_n \otimes \nu_n)(D_n) \to 0$ as $n \to \infty$.

\smallskip

(4) \textit{Condition \ref{item:thm:CUI-uni:b:2}}: For all $(s, s') \notin D_n$, i.e. $|s - s'| >r_n$, the intervals $(s, s+ r_n]$ and $(s', s' + r_n]$ are disjoint, which implies the independence of $W(s + r_n, \cdot) - W(s, \cdot)$ and $W(s' + r_n, \cdot) - W(s', \cdot)$. Hence, the two Brownian motions $B^n_s$ and $B^n_{s'}$ are independent, and in particular diagonally negatively dependent by \cref{lem:DND-properties}(i). Since $\widetilde X^n_\cdot = \Phi(B^n_\cdot)$ with $\Phi$ Borel measurable, \cref{lem:DND-properties}(iii) yields that $\widetilde X^n_s$ and $\widetilde X^n_{s'}$ are diagonally negatively dependent, which verifies condition \ref{item:thm:CUI-uni:b:2}.

\smallskip

Finally, applying \cref{thm:CUI-uniform} yields \eqref{prop:eq:Lp-limit-continuous-nu} as desired.
\end{proof}

\begin{exam}\label{exam:continuous-nu} A concrete choice for continuous $\nu_n$ satisfying the conditions of \cref{prop:continuos-nu} is the exponential distribution
$\nu_n(\od s)= \gamma_n \e^{-\gamma_n s} \1_{(0, \infty)}(s) \od s$ with $\gamma_n >0$. Then, $\sup_{n \ge 1} \nu_n ([0, \infty)) = 1$ obviously. By some standard calculations via Laplace distribution we get
\begin{align*}
(\nu_n\otimes \nu_n)(D_n) = \int_{s, s' > 0 \,:\, |s - s'| \le r_n} \gamma_n^2 \e^{-\gamma_n(s + s')} \od s \od s' = 1 - \e^{-\gamma_n r_n},
\end{align*}
which converges to $0$ whenever $\gamma_n r_n \to 0$ as $n \to \infty$.
\end{exam}



\bibliographystyle{spbasic}

\end{document}